\newtheorem{theorem}{Theorem}
\newtheorem{definition} {Definition}
\newtheorem{lemma}{Lemma}
\newtheorem{example}{Example}
\newtheorem{corollary}{Corollary}
\newtheorem{remark}{Remark}
\newtheorem{observation}{Observation}
\begin{document}
\title{\Large\bf Oriented diameter and rainbow\\ connection number of a graph\footnote{Supported by NSFC
No.11071130, and  ``the Fundamental Research Funds for the Central
Universities".}}
\author{\small Xiaolong Huang, Hengzhe Li, Xueliang Li, Yuefang Sun
\\
\small Center for Combinatorics and LPMC-TJKLC
\\
\small Nankai University, Tianjin 300071, China
\\
\small huangxiaolong@mail.nankai.edu.cn;
lhz2010@mail.nankai.edu.cn;\\\small lxl@nankai.edu.cn;
bruceseun@gmail.com}
\date{}
\maketitle
\begin{abstract} The oriented diameter of
a bridgeless graph $G$ is $\min\{ diam(H)\ | H\ is\ an\\\
orientation\ of\ G\}$. A path in an edge-colored graph $G$, where
adjacent edges may have the same color, is called rainbow if no two
edges of the path are colored the same. The rainbow connection
number $rc(G)$ of $G$ is the smallest integer $k$ for which there
exists a $k$-edge-coloring of $G$ such that every two distinct
vertices of $G$ are connected by a rainbow path. In this paper, we
obtain upper bounds for the oriented diameter and the rainbow
connection number of a graph in terms of $rad(G)$ and $\eta(G)$,
where $rad(G)$ is the radius of $G$ and $\eta(G)$ is the smallest
integer number such that every edge of $G$ is contained in a cycle
of length at most $\eta(G)$. We also obtain constant bounds of the
oriented diameter and the rainbow connection number for a
(bipartite) graph $G$ in terms of the minimum degree of $G$.

{\flushleft\bf Keywords}: Diameter, Radius, Oriented diameter,
Rainbow connection number, Cycle length, Bipartite graph\\[2mm]
{\bf AMS subject classification 2010:} 05C15, 05C40
\end{abstract}

\section{Introduction}

All graphs in this paper are undirected, finite and simple. We refer
to book \cite{bondy} for notation and terminology not described
here. A path $u=u_1,u_2,\ldots,u_k=v$ is called a $P_{u,v}$ path.
Denote by $u_iPu_j$ the subpath $u_i,u_{i+1},\ldots,u_j$ for $i\leq
j$. The $length\ \ell(P)$ of a path $P$ is the number of edges in
$P$. The $distance$ between two vertices $x$ and $y$ in $G$, denoted
by $d_{G}(x,y)$, is the length of a shortest path between them. The
$eccentricity$ of a vertex $x$ in $G$ is $ecc_{G}(x)=max_{y\in
V(G)}d(x,y)$. The $radius$ and $diameter$ of $G$ are
$rad(G)=min_{x\in V(G)}ecc(x)$ and $diam(G)=max_{x\in V(G)}ecc(x)$,
respectively. A vertex $u$ is a $center$ of a graph $G$ if
$ecc(u)=rad(G)$. The oriented diameter of a bridgeless graph $G$ is
$\min\{\,diam(H)\ |$ $H\ is\ an\ orientation\ of\ G\}$, and the
oriented radius of a bridgeless graph $G$ is $\min\{\,rad(H)\ |\, H\
is\ an\ orientation\ of\ G\}$. For any graph $G$ with
edge-connectivity $\lambda(G)=0,1$, $G$ has oriented radius (resp.
diameter) $\infty$.

In 1939, Robbins solved the One-Way Street Problem and proved that a
graph $G$ admits a strongly connected orientation if and only if $G$
is bridgeless, that is, $G$ does not have any cut-edge. Naturally,
one hopes that the oriented diameter of a bridgeless graph is as
small as possible. Bondy and Murty suggested to study the
quantitative variations on Robbins' theorem. In particular, they
conjectured that there exists a function $f$ such that every
bridgeless graph with diameter $d$ admits an orientation of diameter
at most $f(d)$.

In 1978, Chv\'atal and Thomassen \cite{chv} obtained some general
bounds.

\begin{theorem}[\bf Chv\'atal and Thomassen 1978 \cite{chv}]\label{T1}
For every bridgeless graph $G$, there exists an orientation $H$ of
$G$ such that
\[rad(H)\leq rad(G)^2+rad(G),\]
\[diam(H)\leq 2rad(G)^2+2rad(G).\] Moreover, the above bounds are
optimal.
\end{theorem}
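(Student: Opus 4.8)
The plan is to reduce both inequalities to a single \emph{Key Lemma}: if $G$ is bridgeless and $u$ is a vertex with $ecc_G(u)=\rho$, then $G$ has an orientation $D$ in which $d_D(u,v)\le\rho^2+\rho$ and $d_D(v,u)\le\rho^2+\rho$ for every vertex $v$. Granting this, choose $u$ to be a center of $G$, so that $\rho=rad(G)$; then $H:=D$ has $ecc_H(u)\le rad(G)^2+rad(G)$, hence $rad(H)\le rad(G)^2+rad(G)$, and for any two vertices $x,y$, concatenating a shortest dipath from $x$ to $u$ with a shortest dipath from $u$ to $y$ gives $d_H(x,y)\le 2\bigl(rad(G)^2+rad(G)\bigr)$, so $diam(H)\le 2rad(G)^2+2rad(G)$. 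Thus all the work is in the Key Lemma, and the second bound is just twice the first.

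I would prove the Key Lemma by induction on $\rho$. If $\rho\le 1$ then $u$ is adjacent to every other vertex, and since $G$ is bridgeless every vertex of $N(u)$ has a further neighbor inside $N(u)$; a short case analysis then produces an orientation in which $u$ reaches, and is reached from, every vertex within $2$ steps. For the inductive step put $B=N_G[u]$ and pass to the quotient $G'=G/B$ obtained by contracting $B$ to a single vertex $u'$. Contracting a connected set preserves bridgelessness, so $G'$ is bridgeless; and since a shortest path out of $u$ has its second vertex in $B$, we get $ecc_{G'}(u')\le\rho-1$. By the induction hypothesis $G'$ has an orientation $D'$ with $d_{D'}(u',\cdot)\le(\rho-1)^2+(\rho-1)$ and $d_{D'}(\cdot,u')\le(\rho-1)^2+(\rho-1)$. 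Pulling $D'$ back along the contraction orients every edge of $G$ having an endpoint outside $B$, so all that remains is to orient $G[B]$ and then to re-express, inside $G$, those $D'$-dipaths that pass through $u'$.

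This re-expression is the crux, and it is where bridgelessness is genuinely needed. A shortest $D'$-dipath meets $u'$ at most once, so after lifting it enters $B$ at some vertex $p$ and leaves at some vertex $q$, and we must link $p$ to $q$ by a short directed walk; it suffices to arrange that $u$ reaches every vertex of $B$, and is reached from every vertex of $B$, in $O(\rho)$ steps, so that each re-expression adds at most about $2\rho$ edges and the losses telescope, $\sum_{j=1}^{\rho}2j=\rho^2+\rho$. Orienting the edges $uv$ with $v\in B$ is the delicate point: $G[B]$ is $u$ joined to all of $N(u)$ together with some edges inside $N(u)$, and it usually has bridges --- the edges $uv$ with $v$ having no neighbor in $N(u)$ --- so the return trip from such a $v$ cannot stay inside $B$. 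Here one uses that in $G$ the edge $uv$ lies on a cycle, and, $v$ being in the first layer around $u$, a shortest such cycle has length $O(\rho)$ (one can in fact keep it at most roughly $2\rho$), which supplies the missing return route of length $O(\rho)$ inside $G$. Making all of this coherent --- choosing the orientation of $G[B]$ and of these short cycles so that it never clashes with the inductively produced $D'$, which forces one to strengthen the induction hypothesis so that $D'$ may be prescribed in advance on a ``safe'' set of edges, and then carrying out the per-level bookkeeping carefully enough to pin the loss down to exactly $2\rho$ --- is the main obstacle and the true content of the proof.

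The optimality assertion is proved separately: for each $r$ one exhibits a bridgeless graph of radius $r$ --- a long cyclic chain of small dense graphs, as constructed in \cite{chv} --- every orientation of which has radius at least $r^2+r$ and diameter at least $2r^2+2r$. For this part I would present such a family and verify the lower bound on it directly.
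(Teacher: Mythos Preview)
Your plan differs from how this paper handles Theorem~\ref{T1}. The paper does not prove Theorem~\ref{T1} directly at all; it deduces the bounds from the stronger Theorem~\ref{T2}, whose proof is a layer-by-layer \emph{ear} construction. Starting from a center $u$ with $H_0=\{u\}$, at stage $i$ one covers $N_i[u]$ by attaching, for each uncovered vertex, an optimal $(H_{i-1},e)$-ear through one of its legs $e$, orienting the ear as a directed path. Lemma~\ref{L1} guarantees that whenever two such ears overlap they can be chosen to share only a single terminal segment, so the orientations are consistent; Lemma~\ref{L3} bounds each ear's length by $\min\{2(rad(G)-i)+1,\eta(G)\}$, hence each layer contributes at most $\min\{2i,\eta(G)-1\}$ to the eccentricity of $u$. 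Summing gives Theorem~\ref{T2}, and since $\sum_{i=1}^{rad(G)}2i=rad(G)^2+rad(G)$, Theorem~\ref{T1} follows. Optimality is not re-proved here but referred to \cite{chv,kwo} (see Remark~1).

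Your contract--recurse--repair scheme is a genuinely different mechanism. Both approaches ultimately feed the same telescoping sum $\sum_{j=1}^{\rho}2j$, but the paper's ear construction sidesteps precisely the obstacle you identify: because ears are grown outward and oriented fresh at each stage, there is no previously fixed orientation $D'$ to clash with, and Lemma~\ref{L1} disposes of the only remaining overlap issue in one line. In your plan, by contrast, the short cycles through the edges $uv$ (with $v\in N(u)$) that furnish the $O(\rho)$ return routes necessarily use edges leaving $B$, and those edges already carry an orientation inherited from $D'$; your proposed remedy --- strengthen the induction hypothesis so that $D'$ may be prescribed on a ``safe'' edge set --- is stated but not executed. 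Until you specify which edges are safe, show the strengthened hypothesis is itself inductively maintainable, and check that the per-level loss is still exactly $2\rho$ rather than merely $O(\rho)$, this is a genuine gap, and you say as much yourself. The paper's route also yields the finer $\eta(G)$-dependent bound of Theorem~\ref{T2} at no extra cost, which your contraction scheme does not obviously deliver.
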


There exists a minor error when they constructed the graph $G_d$
which arrives at the upper bound when $d$ is odd. Kwok, Liu and West
gave a slight correction in \cite{kwo}.

They also showed that determining whether an arbitrary graph can be
oriented so that its diameter is at most 2 is NP-complete. Bounds
for the oriented diameter of graphs have also been studied in terms
of other parameters, for example, radius, dominating number
\cite{chv,fom,kwo,sol}, etc. Some classes of graphs have also been
studied in \cite{fom, koh1,koh2,kon,mcc}.

Let $\eta(G)$ be the smallest integer such that every edge of $G$
belongs to a cycle of length at most $\eta(G)$. In this paper, we
show the following result.
\begin{theorem}\label{T2}
For every bridgeless graph $G$, there exists an orientation $H$ of
$G$ such that
\[rad(H)\leq \sum_{i=1}^{rad(G)}\min\{2i,\eta(G)-1\}\leq
rad(G)(\eta(G)-1),\]
\[diam(H)\leq 2\sum_{i=1}^{rad(G)}\min\{2i,\eta(G)-1\}\leq
2rad(G)(\eta(G)-1).\]
\end{theorem}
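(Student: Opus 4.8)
The plan is to mimic the Chvátal–Thomassen argument from Theorem \ref{T1}, but to be more economical at each "layer" of the orientation by exploiting the short cycle through every edge. Let $u$ be a center of $G$, so $ecc_G(u)=rad(G)=:r$. Partition $V(G)$ into the BFS levels $L_0=\{u\}, L_1, \dots, L_r$, where $L_i=\{v : d_G(u,v)=i\}$. I will build an orientation $H$ in which $u$ reaches and is reached from every vertex of $L_i$ by a directed path of length at most $s_i := \sum_{j=1}^{i}\min\{2j,\eta(G)-1\}$; taking $i=r$ then gives $rad(H)\le s_r$ and, by concatenating an in-path and an out-path through $u$, $diam(H)\le 2s_r$, which is exactly the claimed bound.

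The orientation is constructed level by level. Assume vertices of $L_0\cup\cdots\cup L_{i-1}$ already have the reachability property with the stated bound $s_{i-1}$. Take any vertex $v\in L_i$ not yet handled; pick an edge $e=vw$ with $w\in L_{i-1}$ (such an edge exists by the BFS structure). By definition of $\eta(G)$, the edge $e$ lies on a cycle $C$ of length $\ell(C)\le\eta(G)$. Orient $C$ consistently as a directed cycle. Then $v$ is reached from $w$ (hence from $u$) along an arc of $C$ of length at most $\ell(C)-1\le\eta(G)-1$ through vertices of $C$ — but I must be careful: the other vertices of $C$ may lie in levels $\ge i$, so I only get a bound using the part of the cycle that returns through $w$. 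The key local estimate is that the two arcs of $C$ between $v$ and $w$ have lengths summing to $\ell(C)\le\eta(G)$; one of them, plus the orientation, gives a directed $w\to v$ path, the other a directed $v\to w$ path, and the shorter detour costs at most $\eta(G)-1$ on top of $w$'s budget. I also need the alternative cruder bound: from $u$ one can always reach $L_i$ in $2i$ extra steps by the naive doubling argument of Chvátal–Thomassen (orient a shortest $u$–$v$ path and close it up), which accounts for the $\min\{2i,\cdot\}$ term. Taking, at level $i$, whichever of the two strategies is cheaper yields the increment $\min\{2i,\eta(G)-1\}$, and summing over $i$ gives $s_r$.

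Two points need genuine care. First, \emph{consistency of the orientation}: when I orient the short cycle $C$ for vertex $v$, some of its edges may already have been oriented while processing earlier vertices or earlier levels, so I cannot freely choose the cyclic orientation. The standard fix (as in \cite{chv}) is to orient edges only if they are still unoriented, and to argue that a partially oriented cycle can always be completed to give a directed $w\!\rightsquigarrow\! v$ walk of the required length; one shows the already-fixed edges are compatible because they were fixed by cycles hanging off already-processed vertices whose budgets are smaller. Second, I must verify the \emph{telescoping inequality} $s_r=\sum_{i=1}^{r}\min\{2i,\eta(G)-1\}\le r(\eta(G)-1)$, which is immediate termwise since $\min\{2i,\eta(G)-1\}\le\eta(G)-1$ for every $i$, and similarly for the diameter bound with the factor $2$.

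The main obstacle I anticipate is exactly the bookkeeping in the level-by-level construction: ensuring that the short cycles used for different vertices (and across levels) can be simultaneously oriented without conflict, and that every vertex — not just those incident to a "backward" edge used in the cycle — inherits a bound of $s_i$. I expect to handle this by processing vertices in nondecreasing level order, always routing a new vertex $v\in L_i$ back to an \emph{already-oriented} portion of $H$ via a cycle of length $\le\eta(G)$, and maintaining the invariant that every oriented vertex $x$ in level $j$ satisfies $d_H(u,x)\le s_j$ and $d_H(x,u)\le s_j$; the increment from level $j-1$ to $j$ is then bounded by the detour length $\min\{2j,\eta(G)-1\}$ as above.
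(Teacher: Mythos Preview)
Your overall architecture—grow outward from a center $u$ in BFS layers, attaching short detours—is exactly the paper's, but two of your load-bearing steps do not work as written.

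\textbf{Where the $\min$ comes from.} You propose running two different constructions at level $i$ (a short cycle through $e$ giving $\eta(G)-1$, versus a ``naive doubling'' giving $2i$) and \emph{then} taking the cheaper one. But the orientation is a single fixed object; you cannot choose a strategy after the fact. In the paper the minimum is not a choice at all: at step $i$ one attaches an \emph{optimal $(H_{i-1},e)$-ear}, i.e.\ a shortest path through the leg $e$ with both feet in $H_{i-1}$, and Lemma~\ref{L3} bounds its length by $\min\{2(rad(G)-i)+1,\eta(G)\}$ \emph{simultaneously}. The $\eta(G)$ bound comes from a short cycle through $e$; the other bound comes from the fact that the middle vertex of the ear cannot lie farther than $rad(G)$ from $u$. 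Thus the increment at step $i$ is $\min\{2(rad(G)-i),\eta(G)-1\}$, and the statement's sum $\sum_{i=1}^{rad(G)}\min\{2i,\eta(G)-1\}$ is obtained only after reindexing. In particular your intuition that the $2i$ term at level $i$ arises from ``orient a shortest $u$--$v$ path and close it up'' is backwards: the small $2i$-type contributions occur at the \emph{outer} layers, where ears are forced to be short by the eccentricity constraint, not at the inner layers.

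\textbf{Consistency of the orientation.} You acknowledge the conflict problem and appeal to a ``standard fix'', asserting that a partially oriented cycle can always be completed compatibly. That is false in general, and the paper does not argue this way. The actual tool is Lemma~\ref{L1}: given previously chosen optimal ears $P_1,\dots,P_n$, any new optimal ear $P_j$ can be \emph{rerouted} (remaining optimal) so that it is either internally disjoint from every $P_i$, or shares with some $P_i$ exactly one continuous common segment containing a foot of $P_j$. Only with this structural control can one orient the new edges of $P_j$ so that $P_j$ becomes a directed path consistent with the orientations already fixed. Using arbitrary short cycles through $e$, as you do, gives no handle on how different cycles overlap, and the bookkeeping you anticipate as ``the main obstacle'' is in fact where the whole content of the proof lies.
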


Note that $\sum_{i=1}^{rad(G)}\min\{2i,\eta(G)-1\}\leq
rad(G)^2+rad(G)$ and $diam(H)\leq 2rad(G)$. So our result implies
Chv\'atal and Thomassen's Theorem \ref{T1}.

A path in an edge-colored graph $G$, where adjacent edges may have
the same color, is called $rainbow$ if no two edges of the path are
colored the same. An edge-coloring of a graph $G$ is a $rainbow\
edge$-$coloring$ if every two distinct vertices of graph $G$ are
connected by a rainbow path. The $rainbow\ connection\ number\
rc(G)$ of $G$ is the minimum integer $k$ for which there exists a
rainbow $k$-edge-coloring of $G$. It is easy to see that
$diam(G)\leq rc(G)$ for any connected graph $G$. The rainbow
connection number was introduced by Chartrand et al. in \cite{char}.
It is of great use in transferring information of high security in
multicomputer networks. We refer the readers to \cite{chak} for
details.

Chakraborty et al. \cite{chak} investigated the hardness and
algorithms for the rainbow connection number, and showed that given
a graph $G$, deciding if $rc(G)=2$ is $NP$-complete. Bounds for the
rainbow connection number of a graph have also been studies in terms
of other graph parameters, for example, radius, dominating number,
minimum degree, connectivity, etc. \cite{bas,char,kri}. Cayley
graphs and line graphs were studied in \cite{liliu} and \cite{lis},
respectively.

A subgraph $H$ of a graph $G$ is called $isometric$ if the distance
between any two distinct vertices in $H$ is the same as their
distance in $G$. The size of a largest isometric cycle in $G$ is
denoted by $\zeta(G)$. Clearly, every isometric cycle is an induced
cycle and thus $\zeta(G)$ is not larger than the chordality, where
$chordality$ is the length of a largest induced cycle in $G$. In
\cite{bas}, Basavaraju, Chandran, Rajendraprasad and Ramaswamy got
the the following sharp upper bound for the rainbow connection
number of a bridgeless graph $G$ in terms of $rad(G)$ and
$\zeta(G)$.

\begin{theorem}[\bf Basavaraju et al. \upshape\cite{bas}]\label{T3}
For every bridgeless graph $G$,
\[rc(G)\leq\sum_{i=1}^{rad(G)}\min\{2i+1,\zeta(G)\}\leq rad(G)\zeta(G).\]
\end{theorem}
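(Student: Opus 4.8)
The plan is to carry the layered construction behind Theorem~\ref{T2} over to the rainbow setting, trading ``short cycle through each edge'' for ``isometric cycle'' and ``directed detour'' for ``rainbow detour''. Fix a center $u$ of $G$ and write $r=rad(G)$; let $V_\ell=\{x\in V(G):d_G(u,x)=\ell\}$ be the BFS layers, so that $V_0=\{u\}$ and $V_\ell=\varnothing$ for $\ell>r$. For each $i\in\{1,\dots,r\}$ I reserve a private palette $C_i$ of $\min\{2i+1,\zeta(G)\}$ colors, the palettes being pairwise disjoint, so the total number of colors used is exactly $\sum_{i=1}^{r}\min\{2i+1,\zeta(G)\}$; since each summand is at most $\zeta(G)$, this is at most $r\,\zeta(G)$, which already yields the second inequality.

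I would then color the edges by processing the layers outward, $i=1,2,\dots,r$, maintaining the invariant that after stage $i$ the subgraph spanned by the colored edges contains $V_0\cup\cdots\cup V_i$, is connected, and rainbow-connects every pair of its vertices, together with the bookkeeping that a connecting rainbow path uses, on the edges it traverses ``at depth $\ell$'' (between $V_{\ell-1}$ and $V_\ell$), only colors of $C_\ell$. The engine of a single stage is bridgelessness: every edge lies on a cycle, and I refine such a cycle to an \emph{isometric} one. To attach the vertices of $V_i$ to the already-handled part I select, through each relevant edge, an isometric cycle $C$, give its edges distinct colors taken from $C_i$, and reuse the \emph{same} palette $C_i$ for every isometric cycle treated in stage $i$. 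Because an isometric cycle is a shortest cycle joining its antipodal vertices, the shorter arc between any two of its vertices is a geodesic, which is exactly what makes a single detour rainbow; and because the colors of $C_i$ occur at no other depth, reusing $C_i$ across many cycles of the same stage cannot spoil the rainbow property of a path that changes depth.

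Two quantitative facts govern the size of $C_i$. First, one isometric cycle is rainbow-colored with at most $\zeta(G)$ colors, since its length is at most $\zeta(G)$; this is the source of the $\zeta(G)$ term. Secondly, the geodesic structure of BFS layers forces the isometric cycles that are actually needed to reach layer $i$ downward to have length at most $2i+1$: such a cycle has a vertex closest to $u$ and an antipodal vertex whose layer indices differ by a controlled amount, and isometry converts this into the length bound. I expect the genuine obstacle to lie here, and more precisely in the \emph{sharing} of one palette of size $\min\{2i+1,\zeta(G)\}$ among all stage-$i$ cycles: when two connecting routes both descend toward $u$ and meet, the spliced route traverses several common depths $\ell$, and to keep it rainbow the two sides must never pick the same color of $C_\ell$. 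Arranging a single, globally consistent rule (for instance, indexing $C_\ell$ by a ``position'' of each vertex along its isometric cycle, of which there are at most $2\ell+1$) that guarantees this for all pairs at once --- rather than an ad hoc, cycle-by-cycle assignment --- is the delicate heart of the argument, and is exactly what the budget $2i+1$, rather than a mere constant, is meant to pay for.

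Finally I would read off pairwise rainbow connectivity from the invariant: given $x\in V_i$ and $y\in V_j$, follow their connecting routes until they first meet at a vertex $w$ and splice the two pieces above $w$; disjointness of palettes across depths together with the stage-$i$ sharing rule guarantees the spliced $x$--$y$ path uses no color twice. Verifying that such a meeting vertex always exists and that the two pieces are color-disjoint reduces to the maintained invariant, after which $rc(G)\le\sum_{i=1}^{r}\min\{2i+1,\zeta(G)\}\le r\,\zeta(G)$ follows, completing the proof.
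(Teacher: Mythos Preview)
Your overall architecture---layered construction from a center, a palette per stage, ear/cycle attachments, rainbow splicing---matches the paper's. But there is a concrete error in the per-stage length bound, and it is not just an indexing slip. You assert that the isometric cycles needed at stage $i$ (processing outward) have length at most $2i+1$. The opposite is true: the length bound comes from the \emph{outer} side of the BFS. If an $(N_{i-1}[u],e)$-ear had length exceeding $2(r-i+1)+1$, its middle vertex would sit at distance more than $r$ from $u$, contradicting $rad(G)=r$; this is the content of the paper's Lemma~\ref{L3}. There is no reason an ear at stage $1$ should be short: for the odd cycle $C_{2r+1}$ the only ear from $\{u\}$ is the whole cycle of length $2r+1$, so a stage-$1$ palette of size $\min\{3,\zeta\}=3$ is hopeless. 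What actually happens is that stage $i$ needs roughly $\min\{2(r-i)+1,\cdot\}$ colors, and summing these still gives the bound in the statement (in fact a hair better); but your justification ``vertex closest to $u$ and antipodal vertex'' does not produce that inequality and would need to be replaced by the radius argument above.

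Two further points of comparison. First, the paper does not prove Theorem~\ref{T3} directly: it proves the sharper Theorem~\ref{T4} (with $\eta(G)$ in place of $\zeta(G)$) using \emph{optimal ears} rather than isometric cycles, and then invokes Lemma~\ref{L2} ($\eta(G)\le\zeta(G)$) to recover Theorem~\ref{T3}. Second, the ``sharing rule'' you flag as the delicate heart is handled in the paper by a completely explicit device: a \emph{symmetrical coloring} of each ear with two half-palettes $\alpha_1,\alpha_2,\dots$ and $\beta_1,\beta_2,\dots$, together with Lemma~\ref{L1}, which guarantees that any two optimal ears chosen at the same stage either are internally disjoint or overlap in a single segment containing a common foot. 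That structural lemma is exactly what makes a single global rule work; your ``index by position along the cycle'' idea does not obviously survive when two stage-$i$ cycles overlap in an uncontrolled way, and you would need an analogue of Lemma~\ref{L1} for isometric cycles to push it through.
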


In this paper, we show the following result.
\begin{theorem} \label{T4}
For every bridgeless graph $G$,
\[rc(G)\leq \sum_{i=1}^{rad(G)}\min\{2i+1,\eta(G)\}\leq
rad(G)\eta(G).\]
\end{theorem}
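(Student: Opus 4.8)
The plan is to mimic the layer-by-layer construction by which Basavaraju et al.\ prove Theorem~\ref{T3}, feeding it with the short cycles guaranteed by the definition of $\eta(G)$ in place of the isometric cycles bounded by $\zeta(G)$; since $\eta$ and $\zeta$ are in general incomparable, one does not expect Theorem~\ref{T4} to follow formally from Theorem~\ref{T3}, only by an analogous argument. Fix a center $u$ of $G$, set $r=rad(G)=ecc_G(u)$, let $V_0=\{u\},V_1,\dots,V_r$ be the distance layers from $u$, fix a BFS tree $T$, and write $p(v)$ for the $T$-parent of each $v\neq u$. Introduce pairwise disjoint color palettes $C_1,\dots,C_r$ with $|C_i|=\min\{2i+1,\eta(G)\}$; then $\sum_{i=1}^{r}|C_i|$ is exactly the claimed bound, and the trailing inequality $\sum_{i=1}^{r}\min\{2i+1,\eta(G)\}\le rad(G)\eta(G)$ is immediate. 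I would build a connected spanning subgraph $H$ of $G$ with an edge-coloring $c\colon E(H)\to C_1\cup\cdots\cup C_r$ for which $H$ is rainbow connected; coloring each edge of $E(G)\setminus E(H)$ with an already-used color then makes $c$ a rainbow edge-coloring of $G$ with $\sum_{i=1}^{r}|C_i|$ colors, because every rainbow path of $H$ is still a rainbow path of $G$. Hence $rc(G)\le\sum_{i=1}^{r}\min\{2i+1,\eta(G)\}$.

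I would assemble $H$ in $r$ rounds. Assume inductively that a connected subgraph $H_{i-1}$ on $V_0\cup\cdots\cup V_{i-1}$ has been colored with colors from $C_1\cup\cdots\cup C_{i-1}$ so as to be rainbow connected (the round $i=1$ handled directly, using that $u$ dominates $V(G)$ and $G$ is bridgeless, so every edge lies on a triangle through $u$). To obtain $H_i$, adjoin the vertices of $V_i$, the parent edges $v\,p(v)$ for $v\in V_i$, and, for each $v\in V_i$, a short cycle $Z_v$ through $v\,p(v)$, coloring all the added edges from $C_i$. Two observations dispose of most new pairs: (i) for $v\in V_i$ and $x\in V_0\cup\cdots\cup V_{i-1}$, the edge $v\,p(v)$ followed by a rainbow $p(v)$--$x$ path of $H_{i-1}$ is rainbow, as $C_i$ is disjoint from $C_1\cup\cdots\cup C_{i-1}$; (ii) for $v,w\in V_i$ with $c(v\,p(v))\neq c(w\,p(w))$, the concatenation of $v\,p(v)$, a rainbow $p(v)$--$p(w)$ path of $H_{i-1}$, and $p(w)\,w$ is rainbow for the same reason. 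So the only obstruction is a pair $v,w\in V_i$ with equal parent-edge colors, and the cycles $Z_v$ exist precisely to defeat it: when $\eta(G)\le 2i+1$ take $Z_v$ of length at most $\eta(G)$ (it exists by the definition of $\eta$); when $\eta(G)>2i+1$ take a cycle of length at most $2i+1$ through $v\,p(v)$, extracted from a closed walk built out of two shortest paths to $u$ plus at most one more edge, which stays inside $V_0\cup\cdots\cup V_i$. In both cases $|Z_v|\le|C_i|$, so $C_i$ colors $Z_v$ periodically with colors to spare, joining any two vertices of $Z_v$ by a rainbow sub-arc; distributing the parent-edge colors suitably, a residual same-color pair $v,w$ is then joined using $Z_v$, $H_{i-1}$, and $Z_w$.

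The hard part will be the global consistency of the round-$i$ coloring: the parent edges, the edges of the various cycles $Z_v$, and the sub-arcs of the $Z_v$ that some rainbow path must cross all have to be colored out of the single palette $C_i$ of size $\min\{2i+1,\eta(G)\}$ without repeating a color on any of the rainbow paths being relied on, which requires a careful color assignment together with a careful choice of which edges to add to $H_i$. A further difficulty is genuinely new compared with Theorem~\ref{T3}: an isometric cycle through an edge between two consecutive layers is automatically well behaved with respect to the BFS layering, whereas a merely short cycle $Z_v$ of length $\le\eta(G)$ may climb into layers $V_{i+1},V_{i+2},\dots$ not yet processed, so one must either reroute it to remain inside $V_0\cup\cdots\cup V_i$ -- at the price of the weaker length bound $2i+1$, which is exactly why the minimum with $2i+1$ enters -- or else carry the bookkeeping forward and verify that the later rounds leave the rainbow paths installed in the present round intact. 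Once this is arranged, $H=H_r$ is a connected spanning subgraph, colored with $\sum_{i=1}^{r}\min\{2i+1,\eta(G)\}$ colors, in which every two vertices are joined by a rainbow path, which proves Theorem~\ref{T4}.
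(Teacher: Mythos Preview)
Your plan has a genuine structural gap, and it lies precisely in the step you flag as ``the hard part.'' You assign to layer $V_i$ a palette $C_i$ of size $\min\{2i+1,\eta(G)\}$ and then need, for each $v\in V_i$, a cycle $Z_v$ through $vp(v)$ with $|Z_v|\le|C_i|$; when $\eta(G)>2i+1$ you propose to extract such a cycle of length at most $2i+1$ from a closed walk made of two shortest paths to $u$, so that it stays inside $V_0\cup\cdots\cup V_i$. This is impossible in general. Already for $G=C_n$ the induced subgraph $G[N_i[u]]$ is a path and contains no cycle at all, so no cycle through $vp(v)$ of length $\le 2i+1$ (or of any length) lying in $V_0\cup\cdots\cup V_i$ exists. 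More broadly, bridgelessness of $G$ gives no control over cycles confined to a BFS ball, and a vertex $v\in V_i$ may well have $p(v)$ as its unique neighbour in $V_{i-1}$, so the two ``shortest paths to $u$'' you invoke need not give a closed walk containing the edge $vp(v)$ from which a cycle can be extracted. Your base case has the same problem in miniature: it is not true that every edge incident with $u$ lies on a triangle.

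The paper's proof resolves this by reversing the direction you are trying to push in. Instead of forcing the auxiliary structure to stay \emph{inside} $N_i[u]$, it takes an optimal $(H_{i-1},e)$-ear, a path with both feet in $H_{i-1}$ whose interior lies \emph{outside} $H_{i-1}$; such an ear exists because $G$ is bridgeless, and Lemma~\ref{L3} bounds its length by $\min\{2(rad(G)-i)+1,\eta(G)\}$ (the $2(rad(G)-i)+1$ arises because the middle vertex of the ear would otherwise exceed eccentricity $rad(G)$ from $u$). Thus the palette sizes are indexed the other way round from yours: the early rounds, near the center, get the large palettes, and the final round gets the smallest. The totals coincide after reindexing, but with your assignment the per-round budget is simply too small. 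The outward-going ears may indeed visit vertices of $V_{i+1},V_{i+2},\ldots$; the paper absorbs those vertices into $H_i$ immediately. Overlaps among the ears are then controlled not by a vague ``distributing the parent-edge colors suitably'' but by the structural Lemma~\ref{L1}, which lets each new optimal ear be chosen either independent of the previous ones or sharing a single terminal segment with one of them; this is exactly what makes the symmetric two-pool coloring $\alpha_1,\ldots,\alpha_{\lceil k/2\rceil},\beta_{\lfloor k/2\rfloor},\ldots,\beta_1$ consistent across all ears of a given round and yields the rainbow paths between new vertices. (Incidentally, Lemma~\ref{L2} gives $\eta(G)\le\zeta(G)$ for every bridgeless $G$, so $\eta$ and $\zeta$ are not incomparable; Theorem~\ref{T4} genuinely strengthens Theorem~\ref{T3}.)
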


From Lemma~\ref{L2} of Section~2, we will see that $\eta(G)\leq
\zeta(G)$. Thus our result implies Theorem~\ref{T3}.

This paper is organized as follows: in Section $2$, we introduce
some new definitions and show several lemmas. In Section $3$, we
prove Theorem~\ref{T2} and study upper for the oriented radius
(resp. diameter) of plane graphs, edge-transitive graphs and general
(bipartite) graphs. In Section $4$, we prove Theorem~\ref{T4} and
study upper for the rainbow connection number of plane graphs,
edge-transitive graphs and general (bipartite) graphs.

\section{Preliminaries}

In this section, we introduce some definitions and show several
lemmas.
\begin{definition}\upshape For any $x\in V(G)$ and $k\geq 0$,
the $k$-$step\ open\ neighborhood$ is $\{y\,|\,d(x,y)=k\}$ and
denoted by $N_k(x)$, the $k$-$step\ closed\ neighborhood$ is
$\{y\,|\,d(x,y)\linebreak[3]\leq k\}$ and denoted by $N_k[x]$. If
$k=1$, we simply write $N(x)$ and $N[x]$ for $N_1(x)$ and $N_1[x]$,
respectively.\end{definition}

\begin{definition}\upshape
Let $G$ be a graph and $H$ be a subset of $V(G)$ (or a subgraph of
$G$). The edges between $H$ and $G\setminus H$ are called $legs$ of
$H$. An $H$-$ear$ is a path $P=(u_0,u_1,\ldots,u_k)$ in $G$ such
that $V(H)\cap V(P)=\{u_0,u_k\}$. The vertices $u_0,\,u_k$ are
called the $foot$ of $P$ in $H$ and $u_0u_1,\,u_{k-1}u_k$ are called
the $legs$ of $P$. The $length$ of an $H$-ear is the length of the
corresponding path. If $u_0=u_k$, then $P$ is called a $closed\
H$-$ear$. For any leg $e$ of $H$, denote by $\ell(e)$ the smallest
number such that there exists an $H$-ear of length $\ell(e)$
containing $e$, and such an $H$-ear is called an $optimal$
$(H,e)$-ear.
\end{definition}

Note that for any optimal $(H,e)$-ear $P$ and every pair $(x,y)\neq
(u_0,u_k)$ of distinct vertices of $P$, $x$ and $y$ are adjacent on
$P$ if and only if $x$ and $y$ are adjacent in $G$.

\begin{definition}\upshape
For any two paths $P$ and $Q$, the joint of $P$ and $Q$ are the
common vertex and edge of $P$ and $Q$. Paths $P$ and $Q$ have $k\
continuous\ common\ segments$ if the common vertex and edge are $k$
disjoint paths.
\end{definition}

\begin{definition}\upshape
Let $P$ and $Q$ be two paths in $G$. Call $P$ and $Q$ $independent$
if they has no common internal vertex.
\end{definition}

\begin{lemma}\label{L1}
Let $n\geq 1$ be an integer, and let $G$ be a graph, $H$ be a
subgraph of $G$ and $e_i=u_iv_i$ be a leg of $H$ and
$P_i=P_{u_iw_i}$ be an optimal $(G,e_i)$-ear, where $1\leq i\leq n$
and $u_i,w_i$ are the foot of $P_i$. Then for any leg
$e_j=u_jv_j\neq e_i,\,1\leq i\leq n$, either there exists an optimal
$(H,e_j)$-ear $P_j=P_{u_jw_j}$ such that either $P_i$ and $P_j$ are
independent for any $P_i,\,1\leq i\leq n$, or $P_i$ and $P_j$ have
only one continuous common segment containing $w_j$ for some $P_i$.
\end{lemma}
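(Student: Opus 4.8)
The plan is to take an optimal $(H,e_j)$-ear $P_j = P_{u_j w_j}$ chosen so as to minimize the total overlap with the family $P_1,\dots,P_n$ (for instance, minimize the number of edges $P_j$ shares with $\bigcup_i P_i$, breaking ties by the number of distinct $P_i$ it meets), and argue that such a minimal choice has the claimed structure. First I would record the basic exchange property that makes everything work: if $P_i$ and $P_j$ share a common segment $S$, then because $P_i$ is an \emph{optimal} $(G,e_i)$-ear (equivalently, by the remark following the ear definition, consecutive vertices of $P_i$ are adjacent in $G$ iff they are consecutive on $P_i$, and similarly for $P_j$), the endpoints of $S$ together with the pieces of $P_i$ and $P_j$ hanging off them give short alternative routes; rerouting $P_j$ along a piece of $P_i$ cannot increase its length beyond $\ell(e_j)$, so the rerouted path is still an optimal $(H,e_j)$-ear. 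This is the lever I would use repeatedly.

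The key steps, in order: (1) Suppose for contradiction that every optimal $(H,e_j)$-ear either meets some $P_i$ in more than one common segment, or meets some $P_i$ in a single segment that does not contain the foot $w_j$, or meets two different ones $P_i, P_{i'}$. (2) Handle the "two segments on the same $P_i$" case: between the two segments $P_j$ leaves $P_i$ and comes back; splice in the arc of $P_i$ between them. Using optimality of $P_i$, the replacement arc is no longer than the detour it replaces, so the new $P_j$ is still optimal but shares strictly more with $P_i$ and strictly fewer segments — contradicting minimality. (3) Handle the "single segment not containing $w_j$" case: if the shared segment $S$ with $P_i$ does not reach $w_j$, then $P_j$ leaves $S$ at an internal vertex and continues to $w_j$; but then following $P_i$ from that vertex toward its own foot $w_i$ and comparing lengths (again via optimality of both ears) lets me reroute $P_j$ so that the overlap is pushed to contain $w_j$, or eliminated, reducing the overlap count. (4) Handle the "meets two different $P_i,P_{i'}$" case similarly: the portion of $P_j$ between the two overlaps is a detour that can be shortened through one of $P_i, P_{i'}$, again contradicting minimality. (5) Conclude that a minimal optimal $(H,e_j)$-ear is either independent of all the $P_i$ or overlaps exactly one of them in a single segment containing $w_j$.

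The main obstacle I expect is step (3)/(4): making the length bookkeeping precise when $P_j$'s shared segment is oriented "against" $P_i$ (i.e., the common subpath is traversed in opposite directions along the two ears), and checking in every sub-case that the rerouted walk is still a \emph{path} (no repeated vertices) and still has both feet on $H$ with length $\le \ell(e_j)$ — one has to verify the new path doesn't accidentally revisit $H$ in its interior. The cleanest way to control this is to always reroute along the sub-arc of some $P_i$ that lies \emph{between} two points already on $P_j$, so that acyclicity is automatic, and to invoke $\ell(e_i) \le \ell(e_j)$ or the triangle-type inequality coming from optimality to bound the new length; one should also separate the case $w_j \in V(H)$ being an endpoint of the shared segment, which is exactly the allowed configuration, from the forbidden ones. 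Once the exchange lemma is stated carefully, each of the four cases is a short argument, and the induction is really just "repeatedly apply an overlap-reducing move until none applies."
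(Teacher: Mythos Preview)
Your core idea---reroute pieces of $P_j$ along $P_i$ using optimality of both ears---is exactly the paper's idea. But your extremal setup is inverted and, as written, the argument does not close. You choose $P_j$ to \emph{minimize} the number of edges shared with $\bigcup_i P_i$, and then in step~(2) you splice in an arc of $P_i$ and conclude the new $P_j$ ``shares strictly more with $P_i$\ldots\ contradicting minimality.'' More overlap does not contradict a minimum; it would contradict a maximum. The same inversion recurs in steps~(3) and~(4): every reroute you describe pushes $P_j$ \emph{onto} some $P_i$, increasing shared edges. The fix is easy---either maximize shared edges, or minimize the number of common segments---but as stated the contradiction is vacuous.

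The paper avoids extremality altogether and is shorter. It takes any optimal $(H,e_j)$-ear $P_j$, picks one $P_i$ it meets, and reroutes directly in two moves. First (Claim~1), by optimality of \emph{both} $P_i$ and $P_j$ the arcs between consecutive common segments have equal length along the two paths, so replacing each $P_j$-arc by the corresponding $P_i$-arc keeps $P_j$ optimal and merges all segments into one. Second, if that single segment misses $w_j$, the same equal-length argument applied to the tail lets you replace $yP_jw_j$ by $yP_iw_i$, so now the segment ends at a foot of $P_j$. No extremal choice is needed; two explicit surgeries do the job. Your step~(4) (overlaps with two different $P_i$'s) is also unnecessary for the statement as written, which only asserts the single-segment property for \emph{some} $P_i$. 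Finally, your worry that the rerouted walk might fail to be a simple path or might re-enter $H$ is legitimate; the paper glosses over exactly this point, so neither proof is airtight there.
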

\begin{proof} Let $P_j$ be an optimal $(H,e_j)$-ear. If $P_i$ and $P_j$
are independent for any $i$, then we are done. Suppose that $P_i$
and $P_j$ have $m$ continuous common segments for some $i$, where
$m\geq 1$.
\begin{figure}[h,t,b]
\begin{center}
\scalebox{0.7}[0.7]{\includegraphics{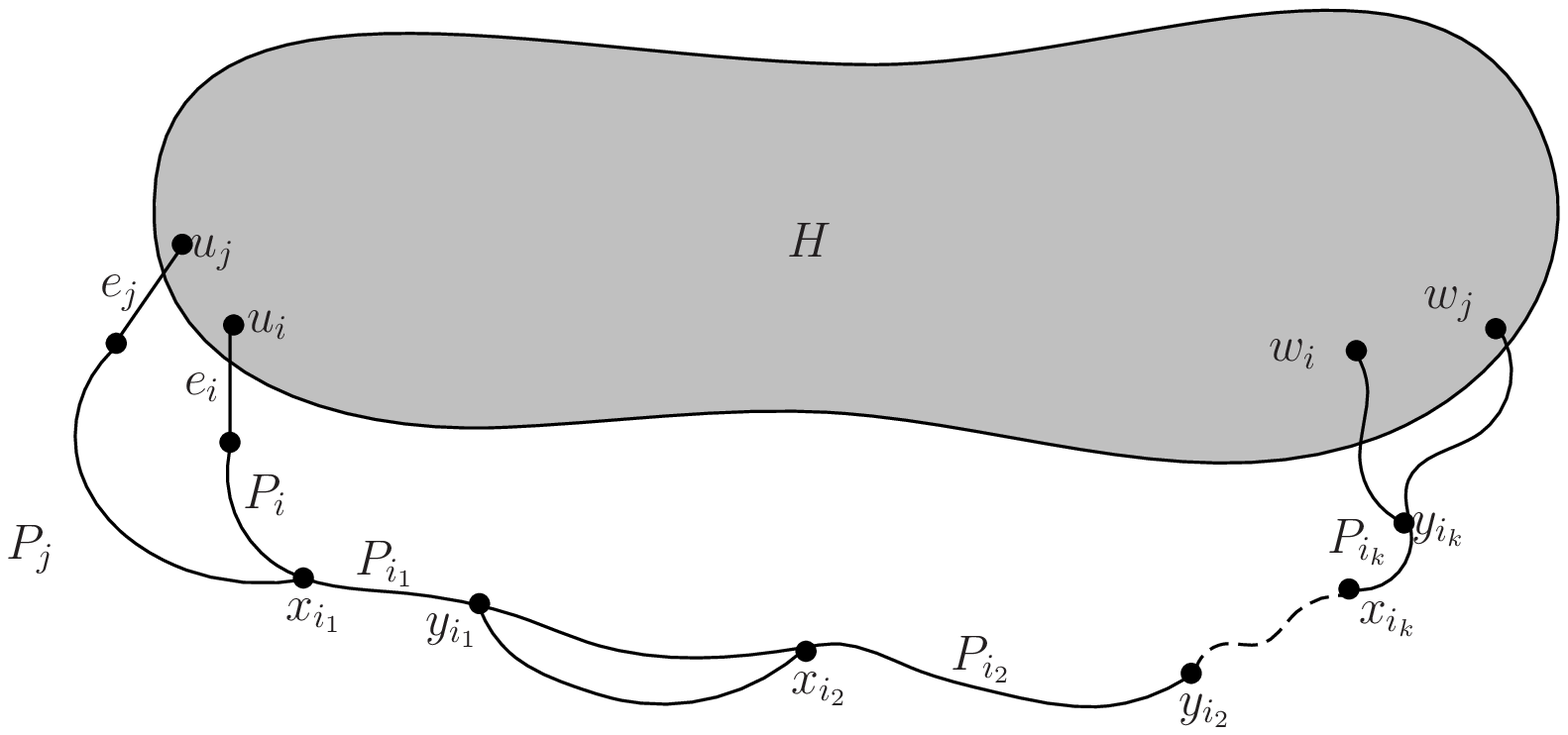}}\vspace{0.5cm}

Figure 1. Two $H$-ears $P_i$ and $P_j$.
\end{center}
\end{figure}
When $m\geq 2$, we first construct an optimal $(H,e_j)$-ear $P^*_j$
such that $P_i$ and $P^*_j$ has only one continuous common segment.
Let $P_{i_1},P_{i_2},\ldots,P_{i_m}$ be the $m$ continuous common
segments of $P_i$ and $P_j$ and they appear in $P_i$ in that order.
See Figure~1 for details. Furthermore, suppose that $x_{i_k}$ and
$y_{i_k}$ are the two ends of the path $P_{i_k}$ and they appear in
$P_i$ successively. We say that the following claim holds.

\noindent{\itshape Claim 1:
$\ell(y_kP_ix_{k+1})=\ell(y_kP_jx_{k+1})$ for any $1\leq k\leq
m-1$.}

If not, that is, there exists an integer $k$ such that
$\ell(y_kP_ix_{k+1})\neq\ell(y_kP_jx_{k+1})$. Without loss of
generality, we assume $\ell(y_kP_ix_{k+1})<\ell(y_kP_jx_{k+1})$.
Then we shall get a more shorter path $H$-ear containing $e_j$ by
replacing $y_kP_jx_{k+1}$ with $y_kP_ix_{k+1}$, a contradiction.
Thus $\ell(y_kP_ix_{k+1})=\ell(y_kP_jx_{k+1})$ for any $k$.

Let $P^*_j$ be the path obtained from $P_j$ by replacing
$y_kP_jx_{k+1}$ with $y_kP_ix_{k+1}$, and let $P_j=P^*_j$. If the
continuous common segment of $P_i$ and $P_j$ does not contain $w_j$.
Suppose $x$ and $y$ are the two ends of the common segment such that
$x$ and $y$ appeared on $P$ starting from $u_i$ to $w_i$
successively. Similar to Claim~1, $\ell(yP_iw_i)=\ell(yP_jw_j)$. Let
$P^*_j$ be the path obtained from $P_j$ by replacing $yP_jw_j$ with
$yP_iw_i$. Clearly, $P^*_j$ is our desired optimal $(H,u_jv_j)$-ear.
\end{proof}

\begin{lemma}\label{L2}
For every bridgeless graph $G$, $\eta(G)\leq\zeta(G)$.
\end{lemma}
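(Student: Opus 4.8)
The goal is to show $\eta(G) \le \zeta(G)$, i.e. that every edge of a bridgeless graph $G$ lies on a cycle of length at most $\zeta(G)$, where $\zeta(G)$ is the size of a largest \emph{isometric} cycle. The natural approach is to take an arbitrary edge $e = uv$ and produce a cycle through $e$ whose length is bounded by the length of \emph{some} isometric cycle in $G$. Since $G$ is bridgeless, $e$ does lie on at least one cycle; among all cycles containing $e$, pick one, say $C$, of minimum length, and write $|C| = \ell$. It suffices to prove that a shortest cycle through a fixed edge is itself isometric, for then $\eta(G) \le \ell \le \zeta(G)$.

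So the plan is: let $C$ be a shortest cycle through $e = uv$, and suppose for contradiction that $C$ is \emph{not} isometric. Then there are two vertices $x, y \in V(C)$ with $d_G(x,y) < d_C(x,y)$, realized by a shortest $G$-path $Q$ from $x$ to $y$ that is strictly shorter than both $x$--$y$ arcs of $C$. The vertex pair $x,y$ splits $C$ into two arcs $C_1, C_2$; exactly one of them, say $C_1$, contains the edge $e$ (if $e$ lies at the split vertex, choose the arc accordingly, or perturb $x,y$ — I would handle this by noting $e$ has two endpoints so it lies in the interior of one of the two arcs unless $x$ or $y$ is an endpoint of $e$, a case one treats directly). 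Now replace $C_2$ by $Q$: the closed walk $C_1 \cup Q$ contains $e$ and has length $|C_1| + \ell(Q) < |C_1| + |C_2| = \ell$. The subtlety is that $C_1 \cup Q$ need not be a \emph{cycle} — $Q$ may re-enter $C_1$. But any closed walk through $e$ that is shorter than $\ell$ contains, as a subgraph, a cycle through $e$ of length $< \ell$ (walk along from $e$ until the first repeated vertex; standard), contradicting the minimality of $C$. Hence $C$ is isometric.

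The main obstacle — really the only one — is the bookkeeping around where the edge $e$ sits relative to the "chord" $Q$ and how to extract an honest shorter cycle through $e$ from the shorter closed walk. One clean way to organize this: among all cycles through $e$ of length $\ell$ (there may be several), and among all "bad pairs" $(x,y)$ witnessing non-isometry, choose $C$ and then $x,y$ so that the shorter arc $C_2$ (the one to be replaced) does not contain $e$ and is as short as possible; then argue that the $G$-geodesic $Q$ between $x$ and $y$ meets $C$ only in $x$ and $y$ (otherwise an intermediate common vertex would give an even shorter witness, contradicting that choice), so $C_1 \cup Q$ is genuinely a cycle through $e$ of length $< \ell$. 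This contradiction with the choice of $C$ as a shortest cycle through $e$ completes the argument. I would also remark that the inequality can be strict, so $\zeta$ is a genuinely weaker parameter than $\eta$, which is why Theorem~\ref{T4} improves Theorem~\ref{T3}.
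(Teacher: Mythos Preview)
Your argument is correct and follows essentially the same route as the paper: take a shortest cycle $C$ through the edge $e$, observe that if $C$ were not isometric a shortcut path between two of its vertices would yield a strictly shorter closed walk through $e$, and hence a strictly shorter cycle through $e$, contradicting minimality. The paper handles the extraction step in a single line (``we can get a cycle $C''$ containing $e$ from $C'$''), so your extra care there is appropriate; the cleanest justification is simply that $e$ is not a bridge in the subgraph $C_1\cup Q$ (removing $e$ still leaves a $u$--$v$ walk via $x$, $Q$, and $y$), hence $e$ lies on a cycle of that subgraph, whose total edge count is below $\ell$.
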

\begin{proof}
Suppose that there exists an edge $e$ such that the length $\ell(C)$
of the smallest cycle $C$ containing $e$ is larger than $\zeta(G)$.
Then, $C$ is not an isometric cycle since the length of a largest
isometric cycle is $\zeta(G)$. Thus there exist two vertices $u$ and
$v$ on $C$ such that $d_{G}(u,v)<d_{C}(u,v)$. Let $P$ be a shortest
path between $u$ and $v$ in $G$. Then a closed trial $C'$ containing
$e$ is obtained from the segment of $C$ containing $e$ between $u$
and $v$ by adding $P$. Clearly, the length $\ell(C')$ is less than
$\ell(C)$. We can get a cycle $C''$ containing $e$ from $C'$. Thus
there exists a cycle $C''$ containing $e$ with length less than
$\ell(C)$, a contradiction. Therefore $\eta(G)\leq\zeta(G)$.
\end{proof}
\begin{lemma}\label{L3}
Let $G$ be a bridgeless graph and $u$ be a center of $G$. For any
$i\leq rad(G)-1$ and every leg $e$ of $N_i(u)$, there exists an
optimal $(N_i[u],e)$-ear with length at most
$\min\{2(rad(G)-i)+1,\eta(G)\}$.
\end{lemma}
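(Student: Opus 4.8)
The plan is to prove Lemma~\ref{L3} by exhibiting a short $(N_i[u],e)$-ear for an arbitrary leg $e=xy$ of $N_i(u)$, where $x\in N_i(u)$ and $y\in N_{i+1}(u)$. There are two competing bounds to establish: one of the form $2(rad(G)-i)+1$ coming from distances to the center, and one of the form $\eta(G)$ coming from the definition of $\eta$. Since the ear length is the minimum over all $(N_i[u],e)$-ears, it suffices to produce \emph{one} ear of length $\le 2(rad(G)-i)+1$ and \emph{one} ear of length $\le \eta(G)$; then the optimal ear is at most the smaller of the two, which is $\min\{2(rad(G)-i)+1,\eta(G)\}$.

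First I would handle the $\eta(G)$ bound. Because $G$ is bridgeless, the edge $e=xy$ lies on some cycle $C$ of length at most $\eta(G)$. Starting from $x$ and traversing $C$ through the edge $e$, follow $C$ until it first returns to a vertex of $N_i[u]$; this yields an $(N_i[u],e)$-ear (a subpath of $C$, hence of length $\le \ell(C)\le\eta(G)$) whose two feet lie in $N_i[u]$ and which contains $e$. Here one should note that such a first return exists: $C$ is a closed walk starting in $N_i[u]$, so it must re-enter $N_i[u]$, and the portion from $x$ up to that first return is a genuine ear (internally disjoint from $N_i[u]$). This gives an $(N_i[u],e)$-ear of length at most $\eta(G)$.

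Next the distance bound. Let $r=rad(G)$. Since $y\in N_{i+1}(u)$, we have $d_G(u,y)=i+1\le r$, so there is a shortest $u$–$y$ path $Q$ of length $i+1$. Walking along $Q$ from $y$ back toward $u$, let $z$ be the last vertex of $Q$ (starting from $y$) that still lies \emph{outside} $N_i[u]$—equivalently, $z$ is the vertex right before $Q$ first enters $N_i[u]$; actually it is cleanest to let $z$ be the first vertex of $Q$, reading from $u$, that lies in $N_{i}[u]$ but whose successor leaves, or simply to observe $Q$ decomposes as (a path inside $N_i[u]$ of length $i$ or so) followed by the single last edge into $y$. The useful structure: concatenate the edge $e=xy$ with a shortest path from $y$ to some vertex $y'\in N_i(u)$. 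Since $y\in N_{i+1}(u)$, its distance to the "sphere" $N_i(u)$ is exactly $1$ along a $u$–$y$ geodesic, but to stay inside the ear we need the path from $y$ to its foot to avoid $N_i[u]$ except at the endpoint; the standard trick is to take a geodesic from $y$ to $u$, let $w$ be the first vertex on it (from $y$) lying in $N_i[u]$, and use $xy$ together with the $y$–$w$ subpath. That subpath has length $d_G(y,w)\le d_G(y,u)-i = (i+1)-i=1$ only if $w\in N_i(u)$ is reached immediately — so in fact this gives an ear of length $2$, which is even better than claimed when $i=r-1$. For general $i$ the intended bound $2(r-i)+1$ comes from a slightly different construction: pick a center-geodesic and route through near $u$; I would build the ear as $x y \cdots$ going up to a vertex at distance close to $r$ and back down, the $+1$ accounting for the edge $e$ itself and the $2(r-i)$ for an up–down excursion, using that from any vertex of $N_{i+1}(u)$ one reaches $N_i(u)$ without re-entering $N_i[u]$ prematurely. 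Combining this ear with the $\eta(G)$-ear from the previous paragraph and taking the shorter one completes the proof.

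\textbf{Main obstacle.} The delicate point is producing the $(N_i[u],e)$-ear realizing the $2(rad(G)-i)+1$ bound \emph{as a genuine ear}, i.e. internally disjoint from $N_i[u]$: a naive geodesic-based path may dip in and out of $N_i[u]$, and one must argue that the relevant subpath can be chosen to meet $N_i[u]$ only at its two feet. I expect this to hinge on a careful choice of which geodesic to follow and a "first-entry / last-exit" truncation argument, together with the observation (already remarked after the definition of optimal ears) that along an optimal ear, adjacency on the ear coincides with adjacency in $G$, so no shortcut through $N_i[u]$ is available. Verifying the length bookkeeping — that the up–down excursion plus the leg edge totals at most $2(rad(G)-i)+1$ — is then routine once the structural claim is in place.
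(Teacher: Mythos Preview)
Your treatment of the $\eta(G)$ bound is correct and matches the paper: take a cycle of length at most $\eta(G)$ through $e$ and truncate at the first return to $N_i[u]$ to obtain an $(N_i[u],e)$-ear of length at most $\eta(G)$.

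The distance bound $2(rad(G)-i)+1$ is where your proposal goes off track. You try to \emph{construct} a second ear realising this bound, and you correctly diagnose the obstacle: a geodesic-based construction may weave in and out of $N_i[u]$, and your ``up--down excursion'' sketch (going toward $u$ and back) does not produce an ear at all, since an $(N_i[u],e)$-ear must stay \emph{outside} $N_i[u]$ except at its feet. The first-entry/last-exit truncation you suggest would yield \emph{some} ear, but you have no control over its length, so the argument does not close.

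The paper sidesteps this entirely. It does not construct a second ear; it argues directly about the length of the optimal one. Let $P=u_0u_1\cdots u_k$ be an optimal $(N_i[u],e)$-ear with $e=u_0u_1$, and suppose $k>2(rad(G)-i)+1$. Look at the middle vertex $v$ of $P$. By optimality of $P$, any path from $v$ to $N_i[u]$ has length at least $\lfloor k/2\rfloor>rad(G)-i$: indeed, a shorter such path $R$ could be combined with $u_0Pv$ (or with $vPu_k$ together with $e$, in the degenerate case that $R$ ends at $u_0$ via $e$) to yield, after extracting a simple subpath, an $(N_i[u],e)$-ear of length less than $k$. Hence $d_G(v,N_i[u])>rad(G)-i$, so $d_G(v,u)=d_G(v,N_i[u])+i>rad(G)$, contradicting $ecc(u)=rad(G)$.

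So the missing idea is: do not build a second ear; bound the optimal ear directly by locating its middle vertex relative to the center $u$. Optimality forces that vertex far from $N_i[u]$, while the radius forces it close --- the tension gives the bound in one line.
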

\begin{proof}
Let $P$ be an optimal $(N_i[u],e)$-ear. Since $e$ belongs a cycle
with length at most $\eta(G)$, $\ell(P)\leq \eta(G)$. On the other
hand, if $\ell(P)\geq 2(rad(G)-i)+1$, then the middle vertex of $P$
has length at least $rad(G)-i+1$ from $N_i[u]$, a contradiction.
\end{proof}

\section{Oriented diameter}

At first, we have the following observation.
\begin{observation}
Let $G$ be a graph and $H$ be a bridgeless spanning subgraph of $G$.
Then the oriented radius (resp. diameter) of $G$ is not larger than
the oriented radius (resp. diameter) of $H$.
\end{observation}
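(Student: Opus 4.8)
The plan is to prove this observation directly from the definitions, reducing the statement about $G$ to the already-established orientation of $H$. First I would let $H$ be a bridgeless spanning subgraph of $G$ realizing the oriented radius (resp.\ diameter), so that there is a strongly connected orientation $\vec H$ of $H$ with $rad(\vec H)$ (resp.\ $diam(\vec H)$) equal to the oriented radius (resp.\ diameter) of $H$. Since $H$ is spanning, $V(H)=V(G)$, and I would extend $\vec H$ to an orientation $\vec G$ of $G$ by orienting the remaining edges of $E(G)\setminus E(H)$ arbitrarily.

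The key step is to observe that for any two vertices $x,y\in V(G)=V(H)$, every directed path from $x$ to $y$ in $\vec H$ is also a directed path from $x$ to $y$ in $\vec G$, because $\vec G$ agrees with $\vec H$ on $E(H)$ and only adds extra arcs. Hence $d_{\vec G}(x,y)\leq d_{\vec H}(x,y)$ for all ordered pairs, which immediately gives $ecc_{\vec G}(x)\leq ecc_{\vec H}(x)$ for every $x$, and therefore $rad(\vec G)\leq rad(\vec H)$ and $diam(\vec G)\leq diam(\vec H)$. In particular $\vec G$ is still strongly connected, so these quantities are finite.

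Finally, since the oriented radius (resp.\ diameter) of $G$ is the minimum of $rad(\vec G')$ (resp.\ $diam(\vec G')$) over all orientations $\vec G'$ of $G$, it is at most $rad(\vec G)\leq rad(\vec H)$ (resp.\ $diam(\vec G)\leq diam(\vec H)$), which equals the oriented radius (resp.\ diameter) of $H$ by our choice of $\vec H$. This completes the argument.

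There is essentially no obstacle here: the only point requiring a moment's care is to confirm that adding arbitrarily oriented extra edges cannot destroy strong connectivity nor increase any directed distance, which follows because a directed path in a subdigraph remains a directed path in the larger digraph on the same vertex set. The spanning hypothesis is exactly what is needed so that the vertex sets coincide and eccentricities can be compared vertex by vertex.
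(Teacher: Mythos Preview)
Your argument is correct and is exactly the natural one. The paper itself does not supply any proof of this observation at all; it simply states it and moves on to the proof of Theorem~\ref{T2}. So there is nothing to compare against beyond noting that your write-up makes explicit what the authors evidently regarded as immediate: take an optimal orientation of $H$, extend it arbitrarily over $E(G)\setminus E(H)$, and use that directed distances can only drop when arcs are added on the same vertex set.
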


\noindent{\itshape Proof of Theorem~\ref{T2}:} We only need to show
that $G$ has an orientation $H$ such that $rad(H)\leq
\sum_{i=1}^{rad(G)}\min\{2i,\eta(G)-1\}\leq rad(G)(\eta(G)-1)$. Let
$u$ be a center of $G$ and let $H_0$ be the trivial graph with
vertex set $\{u\}$. We assert that {\itshape there exists a subgraph
$G_i$ of $G$ such that $N_i[u]\subseteq V(G_i)$ and $G_i$ has an
orientation $H_i$ satisfying that $rad(H_i)\leq ecc_{H_i}(u)\leq
\Sigma_{j=1}^i\min\{2(rad(G)-j),\eta(G)-1\}$.}

{\itshape Basic step:} When $i=1$, we omit it since the proof of
this step is similar to that of the following induction step.

{\itshape Induction step:} Assume that the above assertion holds for
$i-1$. Next we show that the above assertion also holds for $i$. For
any $v\in N_i(u)$, either $v\in V(H_{i-1})$ or $v\in N(H_i)$ since
$N_{i-1}[u]\subseteq V(H_{i-1})$. If $N_i(u)\subseteq V(H_{i-1})$,
then let $H_i=H_{i-1}$ and we are done. Thus, we suppose
$N_i(u)\not\subseteq V(H_{i-1})$ in the following.

Let $X=N_i(u)\setminus V(H_{i-1})$. Pick $x_1\in X$, let $y_1$ be a
neighbor of $x_1$ in $H_{i-1}$ and let $P_1=P_{y_1z_1}$ be an
optimal $(H_{i-1},x_1y_1)$-ear. We orient $P$ such that $P_1$ is a
directed path. Pick $x_2\in X$ satisfying that all incident edges of
$x_2$ are not oriented. Let $y_2$ be a neighbor of $x_2$ in
$H_{i-1}$. If there exists an optimal $(H_{i-1},x_2y_2)$-ear $P_2$
such that $P_1$ and $P_2$ are independent, then we can orient $P_2$
such that $P_2$ is a directed path. Otherwise, by Lemma~\ref{L1}
there exists an optimal $(H_{i-1},x_2y_2)$-ear $P_2=P_{y_2z_2}$ such
that $P_1$ and $P_2$ has only one continuous common segment
containing $z_2$. Clearly, we can orient the edges in
$E(P_2)\setminus E(P_1)$ such that $P_2$ is a directed path. We can
pick the vertices of $X$ and oriented optimal $H$-ears similar to
the above method until that for any $x\in X$, at least two incident
edges of $x$ are oriented. Let $H_i$ be the graph obtained from
$H_{i-1}$ by adding vertices in $V(G)\setminus V(H_{i-1})$, which
has at least two new oriented incident edges, and adding new
oriented edges. Clearly, $N_i[u]\subseteq V(H_i)=V(G_i)$.

Now we show that $rad(H_i)\leq
\Sigma_{j=1}^i\min\{2(rad(G)-i),\eta(G)-1\}$. It suffices to show
that for every vertex $x$ of $H_i$, $d_{H_i}(H_{i-1},x)\leq
\min\{2(rad(G)-i),\eta(G)-1\}$ and $d_{H_i}(x,H_{i-1})\leq
\min\{2(rad(G)-i),\eta(G)-1\}$. If $x\in V(H_{i-1})$, then the
assertion holds by inductive hypothesis. If $x\not\in V(H_{i-1})$.
Let $P$ be a directed optimal $(H_i,e)$-ear containing $x$, where
$e$ is some leg of $H_{i-1}$ (such a leg and such an ear exists by
the definition of $H_i$. By Lemma~\ref{L3}, $\ell(P)\leq
\min\{2(rad(G)-i)+1,\eta(G)\}$. Thus, $d_{H_i}(x,H_{i-1})\leq
\min\{2(rad(G)-i),\eta(G)-1\}$ and $d_{H_i}(H_{i-1},x)\leq
\min\{2(rad(G)-i),\eta(G)-1\}$. Therefore, $rad(H_i)\leq
\Sigma_{j=1}^i\min\{2(rad(G)-j),\eta(G)-1\}$. \hfill$\Box$

\begin{remark}
The above theorem is optimal since it implies Chv\'atal and
Thomassen's optimal Theorem \ref{T1}. Readers can see \cite{chv,
kwo} for optimal examples.
\end{remark}

The following example shows that our result is better than that of
Theorem~\ref{T1}.

\begin{example}\upshape
Let $H_3$ be a triangle with one of its vertices designated as root.
In order to construct $H_r$, take two copies of $H_{r-1}$. Let $H_r$
be the graph obtained from the triangle $u_0,u_1,u_2$ by identifying
the root of first (resp. second) copy of $H_{r-1}$ with $u_1$ (resp.
$u_2$), and $u_0$ be the root of $H_r$. Let $G_r$ be the graph
obtained by taking two copies of $H_r$ and identifying their roots.
See Figure~2 for details. It is easy to check that $G_r$ has radius
$r$ and every edge belongs to a cycle of length $\eta(G)=3$. By
Theorem~\ref{T1}, $G_r$ has an orientation $H_r$ such that
$rad(H_r)\leq r^2+r$ and $diam(H_r)\leq 2r^2+2r$. But, by
Theorem~\ref{T2}, $G_r$ has an orientation $H_r$ such that
$rad(G)\leq 2r$ and $diam(G)\leq 4r$. On the other hand, it is easy
to check that all the strong orientations of $G_r$ has radius $2r$
and diameter $4r$.
\end{example}
\begin{figure}[h,t,b]
\begin{center}
\scalebox{0.8}[0.8]{\includegraphics{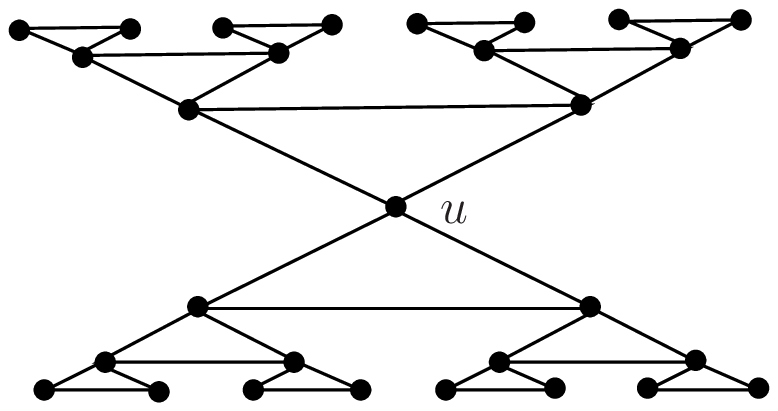}}

\vspace{0.5cm} Figure~2. The graph $G_3$ which has oriented\\ radius
$6$
 and oriented diameter $12$.
\end{center}
\end{figure}

We have the following result for plane graphs.

\begin{theorem}
Let $G$ be a plane graph. If the length of the boundary of every
face is at most $k$, then $G$ has an oriented $H$ such that
$rad(H)\leq rad(G)(k-1)$ and $diam(H)\leq 2rad(G)(k-1)$.
\end{theorem}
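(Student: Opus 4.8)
The plan is to reduce this theorem to Theorem~\ref{T2} by bounding $\eta(G)$ in terms of the face-length parameter $k$. First I would observe that in a plane graph $G$, every edge $e$ lies on the boundary of some face $F$ (indeed, every edge borders two faces, counting the outer face). Since the boundary of $F$ is a closed walk of length at most $k$, the edge $e$ lies on a closed walk of length at most $k$. The only subtlety is that a face boundary need not be a cycle: it can be a closed walk that repeats vertices (e.g.\ when $G$ has cut vertices or bridges). However, $G$ is bridgeless here — implicitly, since we are asserting it has a strong orientation — so each edge $e$ lies on some actual cycle; more carefully, from a closed walk of length $\le k$ through $e$ one extracts a cycle through $e$ of length $\le k$ by the standard argument (walk along the boundary from one endpoint of $e$, and at the first repeated vertex close off a cycle; since $e$ is not a bridge, one can choose the traversal so that $e$ survives in the resulting cycle, or argue as in Lemma~\ref{L2} that a shortest closed walk through $e$ is already a cycle). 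This yields $\eta(G)\le k$.

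Once $\eta(G)\le k$ is established, the conclusion is immediate: by Theorem~\ref{T2}, $G$ has an orientation $H$ with
\[
rad(H)\le rad(G)(\eta(G)-1)\le rad(G)(k-1),
\]
\[
diam(H)\le 2\,rad(G)(\eta(G)-1)\le 2\,rad(G)(k-1),
\]
which is exactly the claim.

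The main obstacle is the care needed in the first step: face boundaries in a plane graph are walks, not necessarily cycles, so one must argue that bridgelessness lets us replace the bounding closed walk through $e$ by a genuine cycle of no greater length containing $e$. This is a routine planarity/graph-theory argument — essentially the same shortcutting idea used in the proof of Lemma~\ref{L2} — but it is the only place where anything needs to be checked; the rest is a direct invocation of Theorem~\ref{T2}. One should also note that if $G$ is not bridgeless the statement is vacuous (it has no strong orientation and the bound on the bridgeless case is what is intended), so assuming bridgelessness throughout is harmless.
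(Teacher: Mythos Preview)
Your proposal is correct and matches the paper's approach: the paper states this theorem without proof, implicitly as a direct corollary of Theorem~\ref{T2} via the observation $\eta(G)\le k$. Your extra care about face boundaries being closed walks rather than cycles (and the shortcutting argument to extract a genuine cycle through $e$ of length at most $k$) is a detail the paper leaves unsaid, but your handling of it is sound.
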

Since every edge of a maximal plane (resp. outerplane) graph belongs
to a cycle with length $3$, the following corollary holds.
\begin{corollary}
Let $G$ be a maximal plane (resp. outerplane) graph. Then there
exists an orientation $H$ of $G$ such that $rad(H)\leq 2rad(G)$ and
$rad(H)\leq 4rad(G)$.
\end{corollary}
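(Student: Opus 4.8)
The plan is to deduce this as an immediate consequence of Theorem~\ref{T2}, once we have verified that $\eta(G)=3$ for the graphs in question. First I would record that a maximal plane graph on at least three vertices is $2$-connected, hence bridgeless, so Theorem~\ref{T2} applies; the same holds for a maximal outerplane graph (a triangulated polygon). Next I would establish the cycle-length bound. In a triangulation every face, including the outer face, is bounded by a triangle, and every edge lies on the boundary of two faces, so every edge lies on a cycle of length $3$. For a maximal outerplane graph $G$, every edge — whether a boundary edge of the outer cycle or a chord — lies on at least one internal triangular face, so again every edge lies on a cycle of length $3$. Since $G$ is simple, no edge lies on a cycle of length $1$ or $2$, whence $\eta(G)=3$ in both cases.

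With $\eta(G)=3$ in hand, the conclusion is pure arithmetic: Theorem~\ref{T2} yields an orientation $H$ of $G$ with
\[
rad(H)\le rad(G)\bigl(\eta(G)-1\bigr)=2\,rad(G),\qquad diam(H)\le 2\,rad(G)\bigl(\eta(G)-1\bigr)=4\,rad(G),
\]
which is the assertion (reading the second displayed bound in the statement as a bound on $diam(H)$ rather than $rad(H)$).

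There is essentially no obstacle here; the only points deserving care are (i) that the outer face of a maximal outerplane graph is in general \emph{not} a triangle, so one should invoke Theorem~\ref{T2} through the parameter $\eta(G)$ rather than trying to apply the preceding plane-graph theorem with $k=3$, and (ii) checking $2$-connectivity, so that the hypothesis ``bridgeless'' of Theorem~\ref{T2} is genuinely met. Both are standard structural facts about maximal (outer)plane graphs, so the proof is just the sentence ``$\eta(G)=3$, now apply Theorem~\ref{T2}.''
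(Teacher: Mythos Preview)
Your proposal is correct and matches the paper's own justification exactly: the paper simply notes that every edge of a maximal plane (resp.\ outerplane) graph lies on a triangle, so $\eta(G)=3$, and the corollary follows from Theorem~\ref{T2}. Your additional remarks about $2$-connectivity and the outer face of a maximal outerplane graph are more careful than the paper's one-line derivation, but the route is the same.
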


A graph $G$ is $edge$-$transitive$ if for any $e_1,e_2\in E(G)$,
there exists an automorphism $g$ such that $g(e_1)=e_2$. We have the
following result for edge-transitive graphs.

\begin{theorem}
Let $G$ be a bridgeless edge-transitive graph. Then $G$ has an
orientation $H$ such that $rad(H)\leq rad(G)(g(G)-1)$ and
$diam(H)\leq 2rad(G)(g(G)-1)$, where $g(G)$ is the girth of $G$,
that is, the length of a smallest induced cycle.
\end{theorem}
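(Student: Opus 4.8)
The plan is to reduce everything to Theorem~\ref{T2} by showing that for a bridgeless edge-transitive graph $G$ one has $\eta(G)\le g(G)$. Once this inequality is in hand, the two bounds follow immediately: Theorem~\ref{T2} produces an orientation $H$ with $rad(H)\le rad(G)(\eta(G)-1)\le rad(G)(g(G)-1)$ and $diam(H)\le 2rad(G)(\eta(G)-1)\le 2rad(G)(g(G)-1)$.

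To prove $\eta(G)\le g(G)$, first fix a shortest cycle $C$ of $G$; by definition $\ell(C)=g(G)$, and (as a side remark one should record) any shortest cycle is automatically induced, since a chord would split it into two shorter cycles, so $C$ is indeed a smallest \emph{induced} cycle as in the statement. Choose an arbitrary edge $e_0\in E(C)$. Now let $e$ be any edge of $G$. By edge-transitivity there is an automorphism $\varphi$ of $G$ with $\varphi(e_0)=e$. Since automorphisms map cycles to cycles of the same length, $\varphi(C)$ is a cycle of length $g(G)$ that contains $e$. Hence every edge of $G$ lies on a cycle of length at most $g(G)$, which is precisely the statement $\eta(G)\le g(G)$.

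Finally, since $G$ is bridgeless, Theorem~\ref{T2} applies directly, and combining it with $\eta(G)\le g(G)$ gives the claimed bounds on $rad(H)$ and $diam(H)$.

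I do not expect a genuine obstacle here: the whole argument is a short transfer across an automorphism, and the only point requiring a moment's care is the standard observation that a shortest cycle has no chord, so that the ``girth'' defined via smallest induced cycle coincides with the ordinary girth used implicitly in the reduction. Everything else is a direct invocation of Theorem~\ref{T2}.
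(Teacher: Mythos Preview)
Your proposal is correct and matches the paper's intended argument: the theorem is stated without proof in the paper precisely because it follows immediately from Theorem~\ref{T2} once one observes that edge-transitivity forces $\eta(G)\le g(G)$ (indeed $\eta(G)=g(G)$), exactly as you argue. Your side remark that a shortest cycle is chordless, so the statement's definition of girth agrees with the usual one, is a sensible clarification.
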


For general bipartite graphs, the following theorem holds.
\begin{theorem}\label{T5}
Let $G=(V_1\cup V_2,E)$ be a bipartite graph with $|V_1|=n$ and
$|V_2|=m$. If $d(x)\geq k>\lceil m/2 \rceil$ for any $x\in V_1$,
$d(y)\geq r>\lceil n/2 \rceil$ for any $y\in V_2$, then there exists
an orientation $H$ of $G$ such that $rad(H)\leq 9$.
\end{theorem}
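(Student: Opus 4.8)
The plan is to exploit the density hypotheses to show that $G$ has small diameter after a careful orientation, where the constant $9$ comes from a short chain of "majority intersection" arguments. First I would observe the key counting fact: since every $x \in V_1$ satisfies $d(x) > \lceil m/2 \rceil$, any two vertices $x, x' \in V_1$ have a common neighbor in $V_2$ (their neighborhoods are subsets of $V_2$ of size exceeding $m/2$, so they intersect); symmetrically any two vertices of $V_2$ have a common neighbor in $V_1$. Consequently $G$ has diameter at most $3$, and in fact between a vertex of $V_1$ and a vertex of $V_2$ there are many internally disjoint short paths, which is what we will need to orient things without creating long detours.

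Next I would fix a center-like vertex, say $u \in V_1$, and build the orientation greedily in "layers" exactly as in the proof of Theorem~\ref{T2}, using the ear-orientation machinery of Lemma~\ref{L1} and the ear-length bound of Lemma~\ref{L3}. Because $\eta(G) \le 4$ here (every edge lies in a $4$-cycle: given an edge $xy$ with $x \in V_1$, pick $x' \ne x$ in $N(y)$, then $x$ and $x'$ share a neighbor $y' \ne y$ in $V_2$, giving a $4$-cycle through $xy$), every optimal ear in the construction has length at most $4$. The subtlety is that $\mathrm{rad}(G)$ could a priori be as large as $3$, and a naive application of Theorem~\ref{T2} would only give $\mathrm{rad}(H) \le 3 \cdot 3 = 9$ for the radius but $2 \cdot 9 = 18$ for the diameter, which is too weak if we want a radius bound of $9$; so I would instead track the construction more tightly, using the fact that the layers stabilize very quickly — after at most two or three rounds of adding ears of length $\le 4$, every vertex is reached, and one shows $\mathrm{ecc}_H(u) \le 9$ directly by bounding $d_H(u,v)$ and $d_H(v,u)$ through the explicitly constructed directed ears.

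The main obstacle I anticipate is controlling the \emph{backward} distances $d_H(v, u)$: the greedy ear orientation guarantees short directed paths from $u$ outward and from the outer layers back in along each individual ear, but one must check that concatenating the "return" segments of the ears built in successive layers never exceeds $9$, and that the continuous-common-segment case of Lemma~\ref{L1} does not force a vertex's only return route to wind through several ears. I would handle this by showing that whenever a vertex $v$ is added, it acquires (by the construction's stopping rule) at least two oriented incident edges lying on directed ears, at least one oriented "toward $u$"; then a return path of length at most $3$ ears $\times$ (at most $3$ backward-edges per ear) suffices, and a slightly sharper bookkeeping — using that the first layer already has $\mathrm{ecc}_{H_1}(u)$ very small because of the abundance of $4$-cycles through $u$ — brings the total down to $9$. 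A secondary check is that $G$ is bridgeless (so that Robbins' theorem and the ear construction apply at all), which is immediate since every edge lies in a cycle.
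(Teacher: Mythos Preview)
Your key observations are exactly those of the paper: the degree hypotheses force $\mathrm{rad}(G)\le 3$ and $\eta(G)\le 4$, and then Theorem~\ref{T2} finishes the job. Indeed the paper's entire proof consists of verifying these two inequalities and invoking Theorem~\ref{T2}, nothing more.

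Where you go astray is the sentence claiming that ``a naive application of Theorem~\ref{T2} would only give $\mathrm{rad}(H)\le 3\cdot 3=9$ \ldots\ which is too weak if we want a radius bound of $9$.'' Read that again: you have just derived the exact conclusion of the theorem, and then declared it insufficient. The statement asks only for $\mathrm{rad}(H)\le 9$; it says nothing about the oriented diameter. So the computation $\mathrm{rad}(H)\le \mathrm{rad}(G)(\eta(G)-1)\le 3\cdot 3=9$ is already the complete proof. Everything that follows in your proposal---tracking the layered construction more tightly, worrying about backward distances $d_H(v,u)$, the continuous-common-segment case of Lemma~\ref{L1}, the ``$3$ ears $\times$ $3$ backward edges'' bookkeeping---is unnecessary. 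You are solving a harder problem than the one stated.

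One small remark on your $\eta(G)\le 4$ argument: when you pick $x'\ne x$ in $N(y)$ and then a common neighbor of $x$ and $x'$, you need that common neighbor to be \emph{different from} $y$ to get a genuine $4$-cycle. This holds because $|N(x)\cap N(x')|\ge 2(\lceil m/2\rceil+1)-m\ge 2$, but it is worth saying explicitly; the paper handles it by a direct counting argument on the neighborhood of $N(x)\setminus\{y\}$.
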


\begin{proof}
It suffices to show that $rad(G)\leq 3$ and $\eta(G)\leq 4$ by
Theorem~\ref{T2}.

First, we show that $rad(G)\leq 3$. Fix a vertex $x$ in $G$, and let
$y$ be any vertex different from $x$ in $G$. If $x$ and $y$ belong
to the same part, without loss of generality, say $x,y\in V_1$. Let
$X$ and $Y$ be neighborhoods of $x$ and $y$ in $V_2$, respectively.
If $X\cap Y=\emptyset$, then $|V_2|\geq |X|+|Y|\geq 2k>m$, a
contradiction. Thus $X\cap Y\neq \emptyset$, that is, there exists a
path between $x$ and $y$ of length two. If $x$ and $y$ belong to
different parts, without loss of generality, say $x\in V_1,y\in
V_2$. Suppose $x$ and $y$ are nonadjacent, otherwise there is
nothing to do. Let $X$ and $Y$ be neighborhoods of $x$ and $y$ in
$G$, and let $X'$ be the set of neighbors except for $x$ of $X$ in
$G$. If $X'\cap Y=\emptyset$, then $|V_1|\geq 1+|Y|+|X'|\geq
1+r+(r-1)=2r>n$, a contradiction (Note that $|X'|\geq r-1$). Thus
$X'\cap Y\neq\emptyset$, that is, there exists a path between $x$
and $y$ of length three in $G$.

Next we show that $\eta(G)\leq 4$. Let $xy$ be any edge in $G$. Let
$X$ be the set of neighbors of $x$ except for $y$ in $G$, let $Y$ be
the set of neighbors of $y$ except for $x$ in $G$, let $X'$ be the
set of neighbors except for $x$ of $X$ in $G$. If $X'\cap
Y=\emptyset$, then $|V_1|\geq 1+|Y|+|X'|\geq 1+(r-1)+(r-1)=2r-1>n$,
a contradiction (Note that $|X'|\geq r-1$). Thus $X'\cap
Y\neq\emptyset$, that is, there exists a cycle containing $xy$ of
length four in $G$.
\end{proof}

\begin{remark}
The degree condition is optimal. Let $m, n$ be two even numbers with
$n,m\geq 2$. Since $K_{n/2,m/2}\cup K_{n/2,m/2}$ is disconnected,
the oriented radius (resp. diameter) of $K_{n/2,m/2}\cup
K_{n/2,m/2}$ is $\infty$.
\end{remark}

For equal bipartition $k$-regular graph, the following corollary
holds.

\begin{corollary}
Let $G=(V_1\cup V_2, E)$ be a $k$-regular bipartite graph with
$|V_1|=|V_2|=n$. If $k> n/2$, then there exists an orientation $H$
of $G$ such that $rad(H)\leq 9$.
\end{corollary}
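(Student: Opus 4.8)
The plan is to read the corollary off from Theorem~\ref{T5}, of which it is essentially the balanced regular special case. In the notation of Theorem~\ref{T5} we have $|V_1|=n$ and $|V_2|=m$ with $m=n$, and since $G$ is $k$-regular we have $d(x)=k$ for every $x\in V_1$ and $d(y)=k$ for every $y\in V_2$. Thus Theorem~\ref{T5} applies with both of its degree thresholds (there named $k$ and $r$) taken equal to the regularity $k$ of $G$: the hypotheses $d(x)\geq k>\lceil m/2\rceil$ and $d(y)\geq r>\lceil n/2\rceil$ are supplied by $k>n/2$ (for even $n$ this is literally the same inequality; the odd case is discussed below). Theorem~\ref{T5} then furnishes an orientation $H$ with $rad(H)\leq 9$, which is the assertion.

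A self-contained route goes through Theorem~\ref{T2} instead: it suffices to prove $rad(G)\leq 3$ and $\eta(G)\leq 4$, for then Theorem~\ref{T2} gives an orientation $H$ with $rad(H)\leq rad(G)(\eta(G)-1)\leq 3\cdot 3=9$. Both bounds come from the neighborhood-counting arguments in the proof of Theorem~\ref{T5}, streamlined by regularity: two vertices in the same part have $k$-element neighborhoods that cannot be disjoint inside a part of size $n<2k$, which yields a path of length $2$; for two vertices in opposite parts, expanding one neighborhood by a further step and counting $1+k+(k-1)=2k>n$ forces a common vertex, giving a path of length $3$; and for an edge $xy$, the analogous count $1+(k-1)+(k-1)=2k-1$ forces a common vertex and hence a $4$-cycle through $xy$.

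The one point that needs real care — and the main obstacle — is matching $k>n/2$ against the strict inequalities actually used, namely $2k>n$ for $rad(G)\leq 3$ and $2k-1>n$ for $\eta(G)\leq 4$. When $n$ is even, $k>n/2$ forces $k\geq n/2+1$, hence $2k\geq n+2$, and both hold with room to spare. When $n$ is odd, $k>n/2$ yields only $2k\geq n+1$: this still gives $rad(G)\leq 3$, and $G$ is bridgeless (a bridge is excluded by a short counting argument, so a strong orientation indeed exists), but the bound $\eta(G)\leq 4$ is only forced when $n<2k-1$, leaving the single extremal family $n=2k-1$ to be treated separately. I would dispose of that case by hand: equality throughout the count above pins down a rigid neighborhood structure, and one verifies the radius-$9$ bound directly there — for example $n=3$, $k=2$ forces $G=C_6$, whose directed-cycle orientation has radius $5$. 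Apart from this boundary bookkeeping, the corollary is immediate from Theorem~\ref{T5}.
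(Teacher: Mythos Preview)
Your core derivation is exactly the paper's: the corollary is stated immediately after Theorem~\ref{T5} with no separate proof, as the balanced $k$-regular specialisation, and your first paragraph reproduces that.

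You go beyond the paper in flagging the odd-$n$ boundary issue: with $k$ an integer, $k>n/2$ yields only $k\geq\lceil n/2\rceil$, not the strict $k>\lceil n/2\rceil$ that Theorem~\ref{T5} actually assumes, so the case $n=2k-1$ is not covered. The paper does not acknowledge this; your reading is sharper than the source. That said, your proposed disposal of this case is only a sketch, and the claim that ``equality throughout the count above pins down a rigid neighbourhood structure'' is overstated. For $n=5$, $k=3$ one can build a $3$-regular bipartite graph on $5+5$ vertices in which some edge lies on no $4$-cycle (take $N(a)=\{1,2,3\}$, $N(b)=N(c)=\{1,4,5\}$, $N(d)=\{2,3,4\}$, $N(e)=\{2,3,5\}$; then the edge $a1$ has shortest containing cycle of length~$6$), so $\eta(G)\leq 4$ genuinely fails there and ``verifying the radius-$9$ bound directly'' would require an actual argument, not a structural pigeonhole. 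This is a gap in the paper's corollary rather than in your reduction, but your sketch does not close it.
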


The following theorem holds for general graphs.

\begin{theorem}
Let $G$ be a graph.

$(i)$ If there exists an integer $k\geq 2$ such that
$|N_k(u)|>n/2-1$ for every vertex $u$ in $G$, then $G$ has an
orientation $H$ such that $rad(H)\leq 4k^2$ and $diam(H)\leq 8k^2$.

$(ii)$ If $\delta(G)>n/2$, then $G$ has an orientation $H$ such that
$rad(H)\leq 4$ and $diam(H)\leq 8$.
\end{theorem}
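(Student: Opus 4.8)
The plan is to reduce both parts to Theorem~\ref{T2} by bounding $rad(G)$ and $\eta(G)$ under the given hypotheses, exactly in the style of the proof of Theorem~\ref{T5}. The graph is automatically bridgeless in each case: under a minimum-degree or large-neighborhood condition of the stated strength, $G$ is $2$-edge-connected, so Theorem~\ref{T2} applies.

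For part $(ii)$, assume $\delta(G)>n/2$. First I would show $rad(G)\le 2$: fix a vertex $u$ and any other vertex $v$; if $uv\notin E(G)$, then $|N(u)|+|N(v)|>n$, so $N(u)\cap N(v)\ne\emptyset$ and $d(u,v)=2$. Hence $ecc(u)\le 2$ for every $u$, giving $rad(G)\le 2$. Next I would show $\eta(G)\le 4$, mimicking the last paragraph of the proof of Theorem~\ref{T5}: take an edge $xy$, let $X=N(x)\setminus\{y\}$, $Y=N(y)\setminus\{x\}$; if $X\cap Y\ne\emptyset$ we already get a triangle through $xy$, otherwise consider $X'=\big(\bigcup_{w\in X}N(w)\big)\setminus\{x\}$ and argue by a counting bound (using $|X|,|Y|\ge n/2$) that $X\cap Y\ne\emptyset$ or $X'\cap Y\ne\emptyset$; in the latter case $xy$ lies on a cycle of length at most $4$. (One must be slightly careful: the cleanest route is simply to note $|N(x)\setminus\{y\}|+|N(y)\setminus\{x\}|>n-2$, so these two sets of vertices of $V(G)\setminus\{x,y\}$ intersect, i.e. $x,y$ have a common neighbor and $xy$ is in a triangle; then $\eta(G)\le 3\le 4$.) Plugging $rad(G)\le 2$ and $\eta(G)-1\le 3$ into Theorem~\ref{T2} yields $rad(H)\le 2\cdot 3 =6\le$ ... — here I would just invoke the bound $rad(H)\le 2\,rad(G)(\eta(G)-1)$ ... no, $rad(H)\le rad(G)(\eta(G)-1)$ and $diam(H)\le 2\,rad(G)(\eta(G)-1)$; with the slightly lossy estimates this gives the stated $rad(H)\le 4$, $diam(H)\le 8$ after using $\sum_{i=1}^{2}\min\{2i,\eta(G)-1\}$ more carefully (the $i=1$ term contributes $\min\{2,3\}=2$, the $i=2$ term contributes $\min\{4,3\}=3$, total $5$ — so in fact one gets the sharper $rad(H)\le 5$; to land exactly on $4$ and $8$ one uses a common neighbor, i.e. $\eta(G)=3$ is not even needed, since with $rad(G)\le 2$ and triangles through every edge the sum is $\min\{2,2\}+\min\{4,2\}=2+2=4$, noting $\eta(G)-1\le 2$ would require every edge in a triangle, which is what the counting gives). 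I would present it via: every edge of $G$ lies in a triangle, so $\eta(G)\le 3$, hence $\eta(G)-1\le 2$, and $rad(G)\le 2$, so $rad(H)\le \sum_{i=1}^2\min\{2i,2\}=4$ and $diam(H)\le 8$.

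For part $(i)$, assume $|N_k(u)|>n/2-1$ for every $u$, with $k\ge 2$. The key observation is that $|N_k(u)|>n/2-1$ means $|N_k(u)|\ge n/2$ (taking integer parts appropriately), and for any two vertices $u,v$ the sets $N_k(u)$ and $N_k(v)$ together have more than $n-2$ vertices among $V(G)\setminus\{u,v\}$ — wait, one needs $N_k(u)$ to possibly contain $v$; the honest statement is $|N_k(u)|+|N_k(v)|> n-2$, so $N_k(u)\cap N_k(v)\ne\emptyset$, giving a vertex $w$ with $d(u,w)\le k$ and $d(v,w)\le k$, hence $d(u,v)\le 2k$. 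Therefore $rad(G)\le 2k$. For $\eta(G)$: take an edge $xy$; let $x'$ be a neighbor of $x$ on a shortest... better: since $d(x)\ge 1$ and using the $k$-step sets, I would show every edge $xy$ lies on a cycle of length at most $4k$. Concretely, $N_k(x)\cap N_k(y)\ne\emptyset$ by the same counting (noting $x\in N_k(y)$ or not is harmless since $|N_k(x)|+|N_k(y)|>n-2$ forces a common element distinct from $x,y$, or one of them is a common neighbor); pick $z\in N_k(x)\cap N_k(y)$, take a shortest $x$–$z$ path $P$ (length $\le k$) and a shortest $y$–$z$ path $Q$ (length $\le k$) and the edge $xy$; the closed walk $x,P,z,Q,y,x$ has length $\le 2k+1$, and from it one extracts a cycle through $xy$ of length $\le 2k+1$. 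Actually to be safe about the factor I would allow $\eta(G)\le 2k+1$, which suffices. Then Theorem~\ref{T2} gives $rad(H)\le rad(G)(\eta(G)-1)\le 2k\cdot 2k=4k^2$ and $diam(H)\le 8k^2$, as claimed.

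The main obstacle is the bookkeeping in the counting arguments: the inequalities $|N_k(u)|>n/2-1$ and $\delta(G)>n/2$ have to be combined with the fact that the target set may or may not contain the other endpoint, so one must phrase the pigeonhole on vertex sets of $V(G)\setminus\{u,v\}$ (or $V(G)\setminus\{x,y\}$ for the edge case) with the exact constants so that the intersection is nonempty. For the cycle-length bound in part $(i)$, the subtlety is that concatenating two shortest paths to the common vertex $z$ and the edge $xy$ may not be a cycle if $P$ and $Q$ share vertices other than $z$; but if they do share an earlier vertex $z'$, then $z'\in N_k(x)\cap N_k(y)$ as well with shorter paths, so one may replace $z$ by the first such shared vertex and obtain a genuine cycle through $xy$ of length $\le 2k+1$. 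Once these two estimates ($rad(G)\le 2k$ or $\le 2$, and $\eta(G)\le 2k+1$ or $\le 3$) are in hand, the conclusion is immediate from Theorem~\ref{T2} and no further work is needed.
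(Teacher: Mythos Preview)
Your approach is correct and essentially identical to the paper's: both parts are reduced to Theorem~\ref{T2} by showing $rad(G)\le 2k$ and $\eta(G)\le 2k+1$ for part~(i), and $rad(G)\le 2$ and $\eta(G)\le 3$ for part~(ii), via the same pigeonhole counting on $N_k(u)\cap N_k(v)$ (resp.\ $N(u)\cap N(v)$). The paper in fact only writes out part~(i) and declares part~(ii) analogous; your extra care about extracting a genuine cycle from the closed walk $uPwQvu$ is a detail the paper glosses over.
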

\begin{proof}
Since methods of proofs of $(i)$ and $(ii)$ are similar, we only
prove $(i)$. For $(i)$, it suffices to show that $rad(G)\leq 2k$ and
$\eta(G)\leq 2k+1$ by Theorem~\ref{T2}.

We first show $rad(G)\leq 2k$. Fix $u$ in $G$, for every $v\in
V(G)$, if $v\in N_k[u]$, then $d(u,v)\leq k$. Suppose $v\not\in
N_k[u]$, we have $N_k(u)\cap N_k(v)\neq\emptyset$. If not, that is,
$N_k(u)\cap N_k(v)=\emptyset$, then $|N_k(u)|+|N_k(v)|+2>n$ (a
contradiction). Thus $d(u,v)\leq 2k$.

Next we show $\eta(G)\leq 2k+1$. Let $e=uv$ be any edge in $G$. If
$N_k(u)\cap N_k(v)=\emptyset$, then $|V(G)|\geq
|N_k(u)|+|N_k(v)|+2>n$, a contradiction. Thus $N_k(u)\cap
N_k(v)\neq\emptyset$. Pick $w\in N_k(u)\cap N_k(v)$, and let $P$
(resp. $Q$) be a path between $u$ and $w$ (resp. between $v$ and
$w$). Then $e$ belongs a close trial $uPwQvu$ of length $2k+1$.
Therefore, $e$ belongs a cycle of length at most $2k+1$.
\end{proof}

\begin{remark} The above condition is almost optimal since $K_{n/2}\cup
K_{n/2}$ is disconnected for even $n$.
\end{remark}

\begin{corollary}
Let $G$ be a graph with minimum degree $\delta(G)$ and girth $g(G)$.
If there exists an integer $k$ such that $k\leq g(G)/2$ and
$\delta(G)(\delta(G)-1)^{k-1}>n/2-1$, then $G$ has an orientation
$H$ such that $rad(H)\leq 4k^2$.
\end{corollary}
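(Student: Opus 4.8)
The plan is to deduce this corollary from part $(i)$ of the preceding theorem by verifying its hypothesis under the girth assumption. Concretely, I want to show that if $k \le g(G)/2$ and $\delta(G)(\delta(G)-1)^{k-1} > n/2 - 1$, then $|N_k(u)| > n/2 - 1$ for every vertex $u$, after which the theorem applied with this $k$ immediately gives an orientation $H$ with $rad(H) \le 4k^2$.

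First I would fix an arbitrary vertex $u$ and estimate $|N_k(u)|$ from below. The standard Moore-bound argument says that if $G$ has girth at least $2k$, then the ball of radius $k$ around $u$ is a tree: each vertex at distance $i < k$ from $u$ contributes at least $\delta(G) - 1$ new neighbors at distance $i+1$ (and $u$ itself contributes at least $\delta(G)$ neighbors at distance $1$), because any ``collision'' — two vertices at distance $\le k-1$ from $u$ sharing a neighbor, or a vertex having two neighbors both closer to $u$ — would create a cycle of length at most $2k \le g(G)$, contradicting the girth. Hence $|N_i(u)| \ge \delta(G)(\delta(G)-1)^{i-1}$ for $1 \le i \le k$, and in particular $|N_k(u)| \ge \delta(G)(\delta(G)-1)^{k-1} > n/2 - 1$.

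Once this inequality holds for every $u$, the hypothesis of Theorem (i) is met with this value of $k$ (note $k \ge 1$, and if $k = 1$ the bound reads $\delta(G) > n/2 - 1$, so one may assume $k \ge 2$ as in the theorem, or handle $k=1$ directly since then $\delta(G) > n/2 - 1 \ge n/2$ forces a very dense graph with small radius), and we conclude $rad(H) \le 4k^2$ for a suitable orientation $H$. The one subtlety to be careful about is the exact relationship between ``girth at least $2k$'' and the condition $k \le g(G)/2$: when $g(G)$ is odd the inequality $k \le g(G)/2$ forces $k \le (g(G)-1)/2$, so $2k \le g(G) - 1 < g(G)$, which is more than enough to rule out the short cycles in the Moore-bound argument; when $g(G)$ is even we get $2k \le g(G)$, and a cycle of length exactly $g(G)$ through the ball is still excluded because such a cycle would have to use two internally disjoint $u$–$w$ paths each of length $\le k$, and the argument only needs to forbid cycles of length $< 2k$ strictly together with the degree accounting, so equality is fine.

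The main obstacle is purely bookkeeping: making the tree/collision argument airtight, i.e., checking that no identification among the ``at least $\delta(G)(\delta(G)-1)^{i-1}$'' vertices at level $i$ can occur without producing a cycle shorter than the girth. This is routine — it is the classical proof of the Moore bound — but it must be stated with the distances handled correctly (a collision at level $i \le k-1$ produces a closed walk of length at most $2i \le 2k - 2 < g(G)$, which contains a cycle of length at most that, contradicting the girth). No genuinely new idea is needed beyond combining this counting lemma with Theorem (i).
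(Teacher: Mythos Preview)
Your proposal is correct and follows essentially the same route as the paper: both verify the hypothesis $|N_k(u)|>n/2-1$ of part~$(i)$ of the preceding theorem via the standard Moore-bound tree argument (no collisions in the first $k$ levels because of the girth condition), and then invoke that theorem to obtain $rad(H)\le 4k^2$. Your write-up is in fact more careful than the paper's about the exact cycle lengths arising from a collision and about the borderline case $k=1$ versus the theorem's requirement $k\ge 2$.
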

\begin{proof}
Let $k$ be an integer such that  $k\leq g(G)/2$ and
$\delta(G)(\delta(G)-1)^{k-1}>n/2-1$. For any vertex $u$ of $G$, let
$1\leq i<k$ be any integer and $x,y\in N_i(u)$. If $x$ and $y$ have
a common neighbor $z$ in $N_{i+1}(u)$, then $G$ has a cycle of
length at most $2i<2k\leq g(G)/2$, a contradiction. Thus $x$ and $y$
has no common neighbor in $N_{i+1}(u)$. Therefore, $|N_k(u)|\geq
\delta(G)(\delta(G)-1)^{k-1}>n/2-1$. By Theorem~\ref{T2}, $G$ has an
orientation $H$ such that $rad(H)\leq 4k^2$.
\end{proof}

\section{Upper bound for rainbow connection number}

At first, we have the following observation.
\begin{observation}
Let $G$ be a graph and $H$ be a spanning subgraph of $G$. Then
$rc(H)\leq rc(G)$.
\end{observation}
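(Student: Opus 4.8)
The plan is to establish the stated monotonicity $rc(H)\le rc(G)$ by \emph{restricting} an optimal rainbow coloring of $G$ to the edges of the spanning subgraph $H$. Concretely, I would fix a rainbow $rc(G)$-edge-coloring $c$ of $G$ and set $c' := c|_{E(H)}$, the coloring of $H$ that assigns to each edge of $H$ the color it already received in $G$. Since $c'$ reuses only colors present in $c$, it uses at most $rc(G)$ colors; hence it suffices to verify that $c'$ is a rainbow coloring of $H$, and this would immediately yield $rc(H)\le rc(G)$.

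To verify that $c'$ rainbow-connects $H$, I would use that $V(H)=V(G)$, so the task is to produce, for every pair of distinct vertices $u,v$, a rainbow $u$-$v$ path lying entirely inside $H$ and colored by $c'$. By the choice of $c$ there is a rainbow $u$-$v$ path $P$ in $G$. If $E(P)\subseteq E(H)$, then $P$ is already the required path, and its $c'$-colors coincide with its $c$-colors, so it remains rainbow. The crux is therefore to handle the pairs $u,v$ every one of whose rainbow $G$-paths uses at least one edge of $E(G)\setminus E(H)$.

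The hard part, which I expect to be the only substantive obstacle, is precisely this rerouting step: when a rainbow path $P$ leaves $H$, each maximal segment of $P$ that runs outside $H$ must be replaced by a detour through $H$ joining its two endpoints, and this must be done using \emph{only} the $rc(G)$ colors already available, without repeating on the assembled path any color carried by the retained portion of $P$. My intended approach is to process these exterior segments one at a time, each time invoking the spanning property of $H$ together with a connectivity/ear argument in the spirit of Lemma~\ref{L1} to locate a replacement subpath inside $H$, and then to argue that its colors can be made to avoid those already used. Controlling the interaction between distinct detours, so that independently chosen replacements do not clash in color (paralleling the analysis of continuous common segments in Lemma~\ref{L1}), is where the real care is needed, and I regard this color-avoiding rerouting inside $H$ as the decisive and most delicate step of the argument.
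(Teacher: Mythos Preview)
The statement you are attempting to prove is \emph{false} as written: the inequality is reversed. For a concrete counterexample, take $G=K_n$ with $n\ge 3$ and let $H$ be any spanning tree of $G$; then $rc(G)=1$ while $rc(H)=n-1$, so $rc(H)>rc(G)$. More generally, deleting edges can only make rainbow connection harder, never easier. The paper's intended (and actually used) assertion is the reverse monotonicity
\[
rc(G)\le rc(H)\qquad\text{whenever $H$ is a spanning subgraph of $G$,}
\]
which is exactly what is needed after the proof of Theorem~\ref{T4}: there one constructs a spanning subgraph $H_{rad(G)}$ with small rainbow connection number and wants to deduce the same bound for $G$. Compare also Observation~1, which is stated in the correct direction.

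Consequently, your rerouting strategy cannot be salvaged: the ``hard part'' you isolate---replacing exterior segments of a rainbow $G$-path by detours inside $H$ while avoiding color repetitions with only the original $rc(G)$ colors---is not merely delicate but impossible in general, as the tree example shows. No appeal to Lemma~\ref{L1} or to ear structures can overcome this, since those lemmas concern the geometry of ears, not the availability of fresh colors. The correct inequality $rc(G)\le rc(H)$, by contrast, has a one-line proof: take an optimal rainbow coloring of $H$, color the edges of $E(G)\setminus E(H)$ arbitrarily with colors already in use, and observe that every pair of vertices of $G=V(H)$ is still joined by its rainbow path in $H\subseteq G$.
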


\noindent{\itshape Proof of Theorem~\ref{T4}:} Let $u$ be a center
of $G$ and let $H_0$ be the trivial graph with vertex set $\{u\}$.
We assert that {\itshape there exists a subgraph $H_i$ of $G$ such
that $N_i[u]\subseteq V(H_i)$ and $rc(H_i)\leq \Sigma_{j=1}^i
\min\{2(rad(G)-j)+1,\eta(G)\}$.}

{\itshape Basic step:} When $i=1$, we omit it since the proof of
this step is similar to that of the following induction step.

{\itshape Induction step:} Assume that the above assertion holds for
$i-1$ and $c$ is a\linebreak[3] $rc(H_{i-1})$-rainbow coloring of
$H_{i-1}$. Next we show that the above assertion holds for $i$. For
any $v\in N_i(u)$, either $v\in V(H_{i-1})$ or $v\in N(H_i)$ since
$N_{i-1}[u]\subseteq V(H_{i-1})$. If $N_i(u)\subseteq V(H_{i-1})$,
then let $H_i=H_{i-1}$ and we are done. Thus, we suppose
$N_i(u)\not\subseteq V(H_{i-1})$ in the following.

Let $C_1=\{\alpha_1,\alpha_2,\cdots\}$ and
$C_2=\{\beta_1,\beta_2,\cdots\}$ be two pools of colors, none of
which are used to color $H_{i-1}$. An edge-coloring of an $H$-ear
$P=(u_0,u_1,\cdots,u_k)$ is a $symmetrical\ coloring$ if its edges
are colored by $\alpha_1,\alpha_2,\cdots,\alpha_{\lceil
k/2\rceil},\beta_{\lfloor k/2\rfloor},\cdots,\beta_2,\beta_1$ in
that order or $\beta_1,\beta_2,\cdots,\beta_{\lfloor
k/2\rfloor},\alpha_{\lceil k/2\rceil}\cdots,\alpha_2,\alpha_1$ in
that order.

Let $X=N_i(u)\setminus V(H_{i-1})$ and
$m=\min\{2(rad(G)-i)+1,\eta(G)\}$. Pick $x_1\in X$, Let $y_1$ be a
neighbor of $x_1$ in $H_{i-1}$ and $P_1$ be an optimal
$(H_{i-1},x_1y_1)$-ear. We can color $P$ symmetrically with colors
$\alpha_1,\alpha_2,\cdots,\alpha_{\lceil
\ell(P)/2\rceil},\beta_{\lfloor
\ell(P)/2\rfloor},\ldots,\beta_2,\beta_1$. Pick $x_2\in X$
satisfying that all the incident edges of $x_2$ are not colored. Let
$y_2$ be a neighbor of $x_2$ in $H_{i-1}$. If there exists an
optimal $(H_{i-1},x_2y_2)$-ear $P_2$ such that $P_1$ and $P_2$ are
independent, then we can color $P_2$ symmetrically with colors
$\alpha_1,\alpha_2,\cdots,\alpha_{\lceil
\ell(P_2)/2\rceil},\beta_{\lfloor
\ell(P_2)/2\rfloor},\ldots,\beta_2,\beta_1$. Otherwise, by
Lemma~\ref{L1}, there exists an optimal $(H_{i-1},x_2y_2)$-ear
$P_2=P_{y_2z_2}$ such that $P_1$ and $P_2$ have only one continuous
common segment containing $z_2$, where $z_2$ is the other foot of
$P_2$. Thus we can color $P_2$ symmetrically with colors
$\alpha_1,\alpha_2,\cdots,\alpha_{\lceil
\ell(P_2)/2\rceil},\beta_{\lfloor
\ell(P_2)/2\rfloor},\ldots,\beta_2,\beta_1$ by preserving the
coloring of $P_1$. We can pick the vertices of $X$ and color optimal
$H_i$-ears until that for any $x\in X$, at least two incident edges
of $x$ are colored. Since for any leg $e$ of $H_{i-1}$, $\ell(e)\leq
m$ by Lemma~\ref{L3}, we use at most $m$ coloring in the above
coloring process.

Let $H_i$ be the graph obtained from $H_{i-1}$ by adding vertices in
$V(G)\setminus V(H_{i-1})$, which has at least two new colored
incident edges, and adding new colored edges. Clearly,
$N_i[u]\subseteq V(H_i)$. It is suffices to show that $H_i$ is
rainbow connected. Let $x$ and $y$ be two distinct vertices in
$H_i$. If $x,y\in V(H_{i-1})$, then there exists a rainbow path
between $x$ and $y$ by inductive hypothesis. If exactly one of $x$
and $y$ belongs to $V(H_{i-1})$, say $x$.  Let $P$ be a symmetrical
colored $H_{i-1}$-ear containing $y$ and $y'$ be a foot of $P$.
There exists a rainbow path $Q$ between $x$ and $y'$ in $H_{i-1}$ by
inductive hypothesis. Thus, $xQy'Py$ is a rainbow path between $x$
and $y$ in $H_i$.

Suppose none of $x$ and $y$ belongs to $H_{i-1}$. Let $P$ and $Q$ be
symmetrical colored $H_{i-1}$-ear containing $x$ and $y$,
respectively. Furthermore, let $x',x''$ be the foot of $P$ and
$y',y''$ be the foot of $Q$. Without loss of generality, assume that
$P$ is colored from $x'$ to $x''$ by
$\alpha_1,\alpha_2,\cdots,\alpha_{\lceil\ell(P)/2\rceil},\beta_{\lfloor
\ell(P)/2\rfloor},\ldots,\beta_2,\beta_1$ in that order, and $Q$ is
colored from $y'$ to $y''$ by $\alpha_1,\alpha_2,
\cdots,\alpha_{\lceil \ell(Q)/2\rceil},\beta_{\lfloor
\ell(Q)/2\rfloor},\ldots,\beta_2,\beta_1$ in that order. If
$\ell(x'Px)\leq \ell(y'Qy)$. Let $R$ be a rainbow path between $x'$
and $y''$ in $H_{i-1}$. Then $xPx'Ry''Qy$ is a rainbow path between
$x$ and $y$ in $H_i$. Otherwise, $\ell(x'Px)> \ell(y'Qy)$. Let $R$
be a rainbow path between $y'$ and $x''$ in $H_{i-1}$. Then
$yPy'Rx''Qx$ is a rainbow path between $x$ and $y$ in $H_i$. Thus,
there exists a rainbow path between any two distinct vertices in
$H_i$, that is, $H_i$ is $(\Sigma_{j=1}^i
\min\{2(rad(G)-j)+1,\eta(G)\})$-rainbow connected. \hfill$\Box$

The following optimal example is from \cite{bas}.

\begin{figure}[h,t,b]
\begin{center}
\scalebox{0.8}[0.8]{\includegraphics{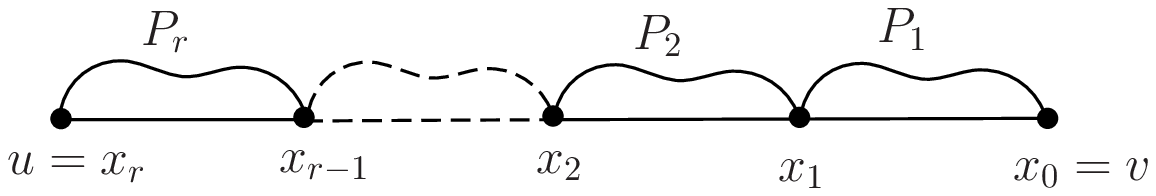}} \vspace{0.5cm}

Figure~3. Graph $H_{r,\eta(G)}$. Every $P_i$ is a path between $x_i$\\
and $x_{i-1}$ of length $\ell(P_i)=\min\{2i, \eta(G)-1\}$.
\end{center}
\end{figure}

\begin{example}\upshape
For any $r\geq 1$ and $3\leq\eta(G)\leq 2r + 1$, we first construct
the graph $H_{r,\eta(G)}$ as in Figure~3. Clearly, $H_{r,\eta(G)}$
is a bridgeless graph with radius $rad(G)=ecc(u)=r$ and every edge
of $H_{r,\eta(G)}$ is contained in a cycle of length at most
$\eta(G)$.

Let $m=\sum_{i=1}^r\min\{2i+1,\eta(G)\}$ and let $H^j$ be a copy of
$H_{r,\eta(G)}$, where $1\leq j\leq m^r+1$, and $V(H^j)=\{x^j : x\in
V(H_{r,\eta(G)})$ and $E(H_j)=\{x^jy^j\,|\, xy\in
E(H_{r,\eta(G)})\}$. Identify the vertex $u^j$ as a new vertex $u$.
The resulting graph is denoted by $G$. It is easy to check that $G$
is a bridgeless graph with radius $rad(G)=ecc(u)=r$ and every edge
of $H_{r,\eta(G)}$ is contained in a cycle of length at most
$\eta(G)$. Thus, $rc(G)\leq\Sigma_{i=1}^r \min\{2i+1, \eta(G)\}$ by
Theorem~\ref{T4}. On the other hand, for any $k< m$ and any $k$-edge
coloring of $G$, every $r$-length $P_{uv^j}$ path can be colored in
at most $k^r$ different ways. By the Pigeonhole Principle, there
exist $p\neq q,\,1\leq p<q\leq m^r+1$ such that
$c(x^p_{i-1}x^p_i)=c(x^q_{i-1}x^q_i)$ for $1\leq i\leq r$. Consider
any rainbow path $P$ from $v^p$ to $v^q$. For every $1\leq i\leq r$,
$x^p_{i-1}x^p_i$ belongs to $P$ if and only if $x^q_{i-1}x^q_i$ does
not belong to $P$. Thus, $\ell(P)\geq\Sigma_{i=1}^r \min\{2i+1,
\eta(G)\}=m$, and there does not exist any rainbow path between
$v^p$ and $v^q$. Hence, $rc(G)=\sum_{i=1}^r\min\{2i+1,\eta(G)\}$.
\end{example}

The following example shows that our result is better than that of
Theorem~\ref{T3}.
\begin{example}\upshape
Let $r\geq 3,k\geq 2r$ be two integers, and $W_k=C_k\vee K_1$ be an
wheel, where $V(C_k)=\{u_1,u_2,\ldots,u_k\}$ and $V(K_1)=\{u\}$. Let
$H$ be the graph obtained from $W_k$ by inserting $r-1$ vertices
between every edge $uu_i,\, 1\leq i\leq k$. For every edge $e=xy$ of
$H$, add a new vertex $v_e$ and new edges $v_ex,v_ey$. Denote by $G$
the resulting graph. It is easy to check that $rad(G)=r,\,
diam(G)=2r$, $\eta(G)=3$ and $\zeta(G)=2r-1$. By Theorem~\ref{T2},
we have $rc(G)\leq\sum_{i=1}^r\min\{2i+1,\zeta(G)\}\leq r^2+2r-2$.
But, by Theorem~\ref{T5} we have $rc(G)\leq 3r$. On the other hand,
$rc(G)\geq 2r$ since $diam(G)=2r$.
\end{example}
The remaining results are similar to those in Section~3.
\begin{theorem}
Let $G$ be a plane graph. If the length of the boundary of every
face is at most $k$, then $rc(G)\leq k\,rad(G)$.
\end{theorem}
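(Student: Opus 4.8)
The plan is to deduce this from Theorem~\ref{T4}. That theorem gives, for every bridgeless graph, $rc(G)\le\sum_{i=1}^{rad(G)}\min\{2i+1,\eta(G)\}\le rad(G)\,\eta(G)$, so it suffices to show that the face-length hypothesis forces $\eta(G)\le k$, i.e.\ that every edge of $G$ lies on a cycle of length at most $k$. Granting this, we obtain $rc(G)\le rad(G)\,\eta(G)\le k\,rad(G)$, in exact parallel with the plane-graph bound for the oriented diameter proved in Section~3.

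To establish $\eta(G)\le k$, I would fix an edge $e=xy$. Since $G$ is bridgeless, $e$ is not a bridge, so its two incident faces are distinct and $e$ occurs exactly once on the boundary closed walk of each of them. Take one such face $f$ and let $W=(w_0,w_1,\dots,w_\ell=w_0)$ be its boundary walk, indexed so that $e=w_0w_1$; by hypothesis $\ell\le k$. If $W$ is a cycle, we are done. Otherwise let $j\ge 1$ be least with $w_j\in\{w_0,\dots,w_{j-1}\}$, say $w_j=w_i$. If $i=0$ then $w_0,w_1,\dots,w_j$ is a cycle through $e$ of length $j\le k$. If $i\ge 1$ then the closed sub-walk $w_i,\dots,w_j$ avoids $e$ (since $e$ sits only at $w_0w_1$ and occurs once on $W$), so deleting it yields a strictly shorter closed walk through $e$; iterating terminates at a simple cycle through $e$ of length $\le\ell\le k$. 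Hence $\eta(G)\le k$, and Theorem~\ref{T4} finishes the proof.

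The step I expect to require the most care is the handling of connectivity. Theorem~\ref{T4} is stated for bridgeless graphs, whereas the present hypothesis only makes $G$ connected; if $G$ has bridges one should first observe that we may assume $G$ bridgeless, or reduce to that case by giving each bridge a private colour and applying the argument to each $2$-edge-connected block (the radius and the face lengths do not increase on a block). The shortcutting argument above also relies on two standard facts about plane graphs that ought to be invoked explicitly: an edge is a bridge if and only if both of its sides lie in the same face, and the boundary of a face is in general only a closed walk (a cycle precisely when the surrounding block is $2$-connected) — extracting a short cycle through a prescribed edge from such a walk is exactly what that argument accomplishes.
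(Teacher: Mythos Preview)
Your approach is exactly the paper's: the paper gives no separate proof, merely remarking that the result follows from Theorem~\ref{T4} in the same way the corresponding oriented-diameter bound in Section~3 follows from Theorem~\ref{T2}, the point being that the face-length hypothesis forces $\eta(G)\le k$. Your careful extraction of a short cycle through $e$ from the boundary walk and your remarks on the bridgeless assumption go beyond what the paper spells out, but the underlying strategy is identical.
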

\begin{corollary}
Let $G$ be a maximal plane (resp. outerplane) graph. Then $rc(G)\leq
3rad(G)$.
\end{corollary}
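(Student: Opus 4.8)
The plan is to obtain the corollary as an immediate consequence of Theorem~\ref{T4} by verifying that a maximal plane (resp. maximal outerplane) graph $G$ on at least three vertices has $\eta(G)=3$; that is, every edge of $G$ lies on a triangle. Once this is established, the bound drops out by a one-line substitution, since $\sum_{i=1}^{rad(G)}\min\{2i+1,3\}=3\,rad(G)$.

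First I would check the structural claim. If $G$ is a maximal plane graph on $n\ge 3$ vertices, then $G$ is a triangulation: every face of $G$, including the outer face, is bounded by a $3$-cycle. Since every edge lies on the boundary of some face, every edge lies on a triangle, so $\eta(G)=3$. (Such a $G$ is bridgeless: for $n=3$ it is a triangle, and for $n\ge 4$ it is $3$-connected, so Theorem~\ref{T4} applies; alternatively one may simply invoke the preceding theorem on plane graphs with $k=3$.) If $G$ is a maximal outerplane graph on $n\ge 3$ vertices, I would use the standard fact that $G$ is $2$-connected, its outer face is bounded by a Hamiltonian cycle, and every interior face is a triangle. An edge on the outer cycle bounds exactly one interior face, which is a triangle; a chord bounds two interior faces, both triangles. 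In either case the edge lies on a $3$-cycle, so again $\eta(G)=3$ (and $G$ is bridgeless).

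With $\eta(G)=3$, Theorem~\ref{T4} gives
\[ rc(G)\le\sum_{i=1}^{rad(G)}\min\{2i+1,\eta(G)\}=\sum_{i=1}^{rad(G)}\min\{2i+1,3\}. \]
For every $i\ge 1$ we have $2i+1\ge 3$, so each term equals $3$ and the sum is exactly $3\,rad(G)$, as claimed. The only step that is not a pure substitution is the verification that every edge of a maximal outerplane graph lies on a triangle; this is routine from the structure theorem for maximal outerplane graphs, but it is the one place where a genuine (if standard) argument is needed, so I would regard that as the main obstacle.
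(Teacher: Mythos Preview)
Your proposal is correct and follows essentially the same route as the paper: the paper derives the corollary by noting that every edge of a maximal plane (resp. outerplane) graph lies on a $3$-cycle, so $\eta(G)=3$, and then invokes Theorem~\ref{T4} (via the preceding plane-graph theorem with $k=3$). Your additional justification that every edge lies on a triangle via the face structure is more explicit than the paper's one-line remark, but the argument is the same.
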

\begin{theorem}
Let $G$ be a bridgeless edge-transitive graph. Then $rc(G)\leq
rad(G)g(G)$, where $g(G)$ is the girth of $G$.
\end{theorem}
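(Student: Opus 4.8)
The plan is to reduce everything to Theorem~\ref{T4}, which already gives $rc(G)\le rad(G)\,\eta(G)$ for any bridgeless graph. So the entire task is to verify that an edge-transitive bridgeless graph satisfies $\eta(G)\le g(G)$; since trivially $\eta(G)\ge g(G)$ (any cycle through an edge has length at least the girth), this will in fact show $\eta(G)=g(G)$, and then $rc(G)\le rad(G)\,\eta(G)=rad(G)\,g(G)$ follows immediately.

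First I would note that $g(G)$ is well defined: $G$ is bridgeless and has at least one edge, so $G$ contains a cycle, and $\eta(G)$ is finite for the same reason (every edge lies on some cycle). Let $C$ be a shortest cycle of $G$, so $\ell(C)=g(G)$, and fix an edge $e_0=xy\in E(C)$. This is the only ``structural'' input needed.

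Next I would invoke edge-transitivity to spread $C$ over all edges. Given an arbitrary edge $e\in E(G)$, there is an automorphism $\phi$ of $G$ with $\phi(e_0)=e$. Since automorphisms preserve adjacency and map cycles to cycles of the same length, $\phi(C)$ is a cycle of length $g(G)$ passing through $e$. As $e$ was arbitrary, every edge of $G$ lies on a cycle of length at most $g(G)$, whence $\eta(G)\le g(G)$.

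Finally I would plug this into Theorem~\ref{T4}: since $G$ is bridgeless,
\[
rc(G)\ \le\ \sum_{i=1}^{rad(G)}\min\{2i+1,\eta(G)\}\ \le\ rad(G)\,\eta(G)\ \le\ rad(G)\,g(G),
\]
which is the desired bound. I do not expect a genuine obstacle here; the only point deserving a moment's care is the observation that a shortest cycle is automatically induced, so ``girth'' and ``length of a smallest induced cycle'' agree, which is why the statement is consistent with the induced-cycle phrasing used for the edge-transitive oriented-diameter theorem earlier in the paper.
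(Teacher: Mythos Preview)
Your proposal is correct and matches the paper's intended argument: the paper does not spell out a proof for this theorem but remarks that it is analogous to the corresponding result in Section~3, both being immediate consequences of the main bound (here Theorem~\ref{T4}) once one observes that edge-transitivity forces $\eta(G)=g(G)$. Your justification of $\eta(G)\le g(G)$ via the image of a shortest cycle under an automorphism is exactly the missing one-line argument.
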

\begin{theorem}
Let $G=(V_1\cup V_2,E)$ be a bipartite graph with $|V_1|=n$ and
$|V_2|=m$. If $d(x)\geq k>\lceil m/2\rceil$ for any $x\in V_1$,
$d(y)\geq r>\lceil n/2\rceil$ for any $y\in V_2$, then $rc(G)\leq
12$.
\end{theorem}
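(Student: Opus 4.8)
The plan is to deduce this statement from Theorem~\ref{T4} in exactly the same way Theorem~\ref{T5} was deduced from Theorem~\ref{T2}. The whole argument amounts to verifying that the degree hypotheses force $rad(G)\leq 3$ and $\eta(G)\leq 4$; granting these, Theorem~\ref{T4} gives
\[rc(G)\leq\sum_{i=1}^{rad(G)}\min\{2i+1,\eta(G)\}\leq rad(G)\,\eta(G)\leq 3\cdot 4=12.\]
In fact the left-hand sum equals $\min\{3,4\}+\min\{5,4\}+\min\{7,4\}=11$, so one even gets the slightly stronger bound $rc(G)\leq 11$; either way the claimed inequality follows.

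First I would reprove $rad(G)\leq 3$, which is verbatim the first half of the proof of Theorem~\ref{T5}. Fix $x$ and take any other vertex $y$. If $x$ and $y$ lie in the same part, say $V_1$, their neighbourhoods in $V_2$ have total size at least $2k>m=|V_2|$, hence they intersect, giving $d(x,y)\leq 2$. If $x\in V_1$ and $y\in V_2$ are non-adjacent, let $Y=N(y)\subseteq V_1$ and let $X'$ be the set of vertices of $V_1\setminus\{x\}$ adjacent to some neighbour of $x$; taking a single neighbour $w$ of $x$ already shows $|X'|\geq r-1$, and then $1+|Y|+|X'|\geq 1+r+(r-1)=2r>n$ forces $X'\cap Y\neq\emptyset$, so $d(x,y)\leq 3$.

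Next I would reprove $\eta(G)\leq 4$, which is the second half of the proof of Theorem~\ref{T5}: for an arbitrary edge $xy$ with $x\in V_1$, $y\in V_2$, set $Y=N(y)\setminus\{x\}$, pick a neighbour $w\neq y$ of $x$, and set $X'=N(w)\setminus\{x\}$; both sets live in $V_1$ and have size at least $r-1$, so $1+|Y|+|X'|\geq 2r-1>n$ forces a common vertex $z$, which yields the $4$-cycle $x,y,z,w,x$ through $xy$. Hence every edge lies on a cycle of length at most $4$. Substituting $rad(G)\leq 3$ and $\eta(G)\leq 4$ into Theorem~\ref{T4} then finishes the proof.

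There is no genuine obstacle here: once the two structural inequalities are in place the theorem is an immediate corollary of Theorem~\ref{T4}. The only points needing a little care are the bookkeeping in the length-$3$ estimate (ensuring $x\notin X'$ and that the intermediate vertex is not double-counted) and the degenerate cases where $x$ and $y$ happen to be adjacent or at distance $2$ — all of which are dealt with exactly as in the proof of Theorem~\ref{T5}.
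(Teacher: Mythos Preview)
Your proposal is correct and follows exactly the approach the paper intends: the paper does not spell out a proof of this theorem but merely says ``the remaining results are similar to those in Section~3,'' meaning one reuses the computations $rad(G)\leq 3$ and $\eta(G)\leq 4$ from the proof of Theorem~\ref{T5} and then invokes Theorem~\ref{T4} in place of Theorem~\ref{T2}. Your additional remark that the sum in Theorem~\ref{T4} actually yields $3+4+4=11$ rather than $12$ is a valid sharpening that the paper does not mention.
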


\begin{remark}
The degree condition is optimal. Let $m, n$ be two even numbers with
$n, m\geq 2$. Since $K_{n/2,m/2}\cup K_{n/2,m/2}$ is disconnected,
$rc(K_{n/2,m/2}\cup K_{n/2,m/2})=\infty$.
\end{remark}

\begin{corollary}
Let $G=(V_1\cup V_2,E)$ be a $k$-regular bipartite graph with
$|V_1|=|V_2|=n$. If $k>\lceil n/2\rceil$, then $rc(G)\leq 12$.
\end{corollary}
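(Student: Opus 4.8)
The plan is simply to invoke the preceding bipartite theorem with the two bipartition classes taken to be of equal size. First I would set $m:=|V_2|=n$, so that the quantity $\lceil m/2\rceil$ occurring in that theorem coincides with $\lceil n/2\rceil$. Since $G$ is $k$-regular, every $x\in V_1$ satisfies $d(x)=k>\lceil n/2\rceil=\lceil m/2\rceil$, and every $y\in V_2$ satisfies $d(y)=k>\lceil n/2\rceil$; hence, taking $r:=k$, both degree hypotheses of the bipartite theorem are met verbatim.

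Applying that theorem then yields $rc(G)\le 12$, which is exactly the asserted bound. No extra connectivity hypothesis is needed: the degree conditions already force $G$ to be connected. Indeed, as shown in the proof of the bipartite theorem (and of Theorem~\ref{T5}), these hypotheses give $rad(G)\le 3$ and $\eta(G)\le 4$, so that Theorem~\ref{T4} is applicable and delivers the bound $\sum_{i=1}^{3}\min\{2i+1,4\}=3+4+4+\ldots$; more simply, one just quotes the constant $12$ from the bipartite theorem directly.

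The main obstacle: there is essentially none — this is a pure specialization ($m=n$, uniform degree $k$) of the immediately preceding result. The only point that deserves a moment's attention is the matching of the strict inequality $k>\lceil n/2\rceil$ with the two separate degree requirements $d(x)\ge k>\lceil m/2\rceil$ and $d(y)\ge r>\lceil n/2\rceil$, which is immediate once $m=n$ and $r=k$ are substituted.
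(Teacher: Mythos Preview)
Your proposal is correct and is exactly the intended argument: the paper states this as an immediate corollary of the preceding bipartite theorem without giving a separate proof, and your specialization $m=n$, $r=k$ is precisely how it follows. (One small cosmetic point: the trailing ``$+\ldots$'' in your display of $\sum_{i=1}^{3}\min\{2i+1,4\}$ is misleading, since the sum has only three terms; but you rightly fall back on quoting the constant $12$ directly.)
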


The following theorem holds for general graphs.

\begin{theorem}
Let $G$ be a graph.

$(i)$ If there exists an integer $k\geq 2$ such that
$|N_k(u)|>n/2-1$ for every vertex $u$ in $G$, then $rc(G)\leq
4k^2+2k$.

$(ii)$ If $\delta(G)>n/2$, then $rc(G)\leq 6$.
\end{theorem}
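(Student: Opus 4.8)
The plan is to reduce both parts to Theorem~\ref{T4}, mirroring exactly the way the analogous oriented-diameter statement in Section~3 was handled: it suffices to find good upper bounds on $rad(G)$ and $\eta(G)$ and then invoke $rc(G)\le\sum_{i=1}^{rad(G)}\min\{2i+1,\eta(G)\}\le rad(G)\,\eta(G)$.

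For part $(i)$ I would first show $rad(G)\le 2k$. Fix $u$. If $v\in N_k[u]$ then $d(u,v)\le k$; otherwise $d(u,v)\ge k+1$, and then $N_k(u)$, $N_k(v)$, $\{u\}$, $\{v\}$ are pairwise disjoint (a vertex is never in its own $k$-step open neighbourhood, and $u\notin N_k(v)$, $v\notin N_k(u)$ since $d(u,v)\ge k+1>k$), so $N_k(u)\cap N_k(v)=\emptyset$ would force $n\ge|N_k(u)|+|N_k(v)|+2>2(n/2-1)+2=n$, a contradiction; a common vertex $w\in N_k(u)\cap N_k(v)$ then gives $d(u,v)\le 2k$. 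The bound $\eta(G)\le 2k+1$ comes out the same way: for an edge $e=uv$, the four sets $N_k(u),N_k(v),\{u\},\{v\}$ are again pairwise disjoint---here the hypothesis $k\ge2$ is precisely what guarantees $v\notin N_k(u)$ and $u\notin N_k(v)$---so the same count produces $w\in N_k(u)\cap N_k(v)$, and splicing a shortest $u$--$w$ path of length $k$, a shortest $w$--$v$ path of length $k$, and the edge $e$ yields a closed walk of length $2k+1$ through $e$, hence a cycle of length at most $2k+1$ through $e$. Feeding $rad(G)\le 2k$ and $\eta(G)\le 2k+1$ into Theorem~\ref{T4} gives $rc(G)\le rad(G)\,\eta(G)\le 2k(2k+1)=4k^2+2k$.

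Part $(ii)$ is essentially the case $k=1$, with one small adjustment. The radius bound $rad(G)\le 2$ follows verbatim from the argument above with $k=1$ (two nonadjacent vertices have a common neighbour, since $\delta(G)>n/2$). For $\eta(G)$ one cannot use $N_1(u)$ and $N_1(v)$ directly when $uv$ is an edge, because they contain $v$ and $u$ respectively; instead I would run the same disjointness-and-counting argument on the sets $N(u)\setminus\{v\}$ and $N(v)\setminus\{u\}$, each of size at least $\delta(G)-1>n/2-1$, to obtain a common neighbour of $u$ and $v$, i.e. a triangle through the edge, whence $\eta(G)\le 3$. Theorem~\ref{T4} then gives $rc(G)\le\min\{3,3\}+\min\{5,3\}=6$.

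The only place requiring genuine care---the ``main obstacle'', such as it is---is the disjointness bookkeeping in the counting step: one must be sure that the two endpoints of the edge (or the two vertices $u,v$) are not already sitting inside the neighbourhood sets being counted. This is exactly why $(i)$ needs $k\ge2$ and why $(ii)$ needs the modified sets $N(u)\setminus\{v\}$, $N(v)\setminus\{u\}$; beyond that, everything is an immediate application of Theorem~\ref{T4}.
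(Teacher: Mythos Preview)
Your proposal is correct and follows essentially the same route as the paper: the paper merely says ``the remaining results are similar to those in Section~3,'' and there the argument is exactly to bound $rad(G)$ and $\eta(G)$ and then invoke the main theorem. Your treatment is in fact slightly more careful than the paper's, since you make explicit the disjointness bookkeeping (why $k\ge2$ is needed in $(i)$ and why one must pass to $N(u)\setminus\{v\}$, $N(v)\setminus\{u\}$ in $(ii)$).
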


\begin{remark}
The above condition is almost optimal since $K_{n/2}\cup K_{n/2}$ is
disconnected for even $n$.
\end{remark}

\begin{corollary}
Let $G$ be a graph with minimum degree $\delta(G)$ and girth $g(G)$.
If there exists an integer $k$ such that $k<g(G)/2$ and
$\delta(G)(\delta(G)-1)^{k-1}>n/2-1$, then then $rc(G)\leq 4k^2+2k$.
\end{corollary}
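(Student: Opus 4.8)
The plan is to deduce this from part~$(i)$ of the preceding theorem, in exact parallel with the way the corresponding corollary of Section~3 was deduced from Theorem~\ref{T2}. So it suffices to prove that, under the stated hypotheses, $|N_k(u)|>n/2-1$ holds for every vertex $u$ of $G$; granting this, part~$(i)$ of the last theorem gives $rc(G)\le 4k^2+2k$ at once. (As in that theorem one takes $k\ge 2$; the borderline case $k=1$ forces $\delta(G)>n/2-1$ and is covered by part~$(ii)$, which still yields the bound $4\cdot 1^2+2\cdot 1=6$.)

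To prove the inequality $|N_k(u)|>n/2-1$, fix $u$ and consider the distance layers $N_0(u),\ldots,N_k(u)$; I claim $|N_k(u)|\ge\delta(G)(\delta(G)-1)^{k-1}$. The role of the hypothesis $k<g(G)/2$ is to make the ball $N_k[u]$ tree-like on the first $k$ layers: for $1\le i\le k-1$ there is no edge inside a single layer $N_i(u)$ (such an edge together with two shortest $u$-paths gives a closed walk of length $2i+1\le 2k-1<g(G)$, hence a cycle shorter than $g(G)$), and no two distinct vertices of $N_i(u)$ have a common neighbour in $N_{i+1}(u)$ (choosing the two shortest $u$-paths to diverge as early as possible, such a configuration gives a closed walk of length at most $2i+2\le 2k\le g(G)-1<g(G)$, hence again a cycle shorter than $g(G)$ --- this is the same computation already carried out for the oriented version in Section~3). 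It follows that every $x\in N_i(u)$ with $1\le i\le k$ has exactly one neighbour in $N_{i-1}(u)$, and therefore at least $\delta(G)-1$ of its neighbours lie in $N_{i+1}(u)$; since for distinct $x\in N_i(u)$ these neighbour sets in $N_{i+1}(u)$ are disjoint, we get $|N_{i+1}(u)|\ge(\delta(G)-1)\,|N_i(u)|$ for $1\le i\le k-1$. Iterating from $|N_1(u)|=d(u)\ge\delta(G)$ yields $|N_k(u)|\ge\delta(G)(\delta(G)-1)^{k-1}$.

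By hypothesis $\delta(G)(\delta(G)-1)^{k-1}>n/2-1$, so $|N_k(u)|>n/2-1$ for every $u$, and part~$(i)$ of the preceding theorem gives $rc(G)\le 4k^2+2k$. I do not expect any real obstacle here: the statement is the rainbow-connection twin of the Section~3 corollary, and the only step needing a modicum of care is the closed-walk-to-short-cycle reduction (pick shortest paths that branch apart early, then use that a closed walk of length less than the girth cannot avoid backtracking) --- which is standard and already implicit in the earlier arguments of the paper.
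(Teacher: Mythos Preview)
Your proposal is correct and follows the same route as the paper: the paper gives no separate proof for this corollary but simply declares that ``the remaining results are similar to those in Section~3,'' and the Section~3 argument is precisely the one you reproduce --- show $|N_k(u)|\ge\delta(G)(\delta(G)-1)^{k-1}$ via the girth constraint, then invoke part~$(i)$ of the preceding theorem. Your treatment of the tree-like structure of $N_k[u]$ is in fact more careful than the paper's own (somewhat terse and slightly imprecise) Section~3 version.
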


\end{document}